\documentclass[11pt]{amsart}
\usepackage[utf8]{inputenc}
\usepackage{amsfonts, amssymb, amsmath, amsthm, color, float,enumerate}
\usepackage{url}
\usepackage{bm}
\usepackage[unicode,psdextra]{hyperref}
\usepackage{pgfplots,tikz}
\pgfplotsset{compat=1.18}

\title[Mixed weak type inequalities for pairs of weights]{Mixed weak type inequalities for pairs of weights related to the Hardy-Littlewood maximal function, Calderón-Zygmund operators and their commutators.}
\author{}
\date{}

\usepackage[a4paper, left=2cm, right=2cm, top=3cm, bottom=3cm]{geometry} 

\theoremstyle{plain}
   \newtheorem{teo}{Theorem}
   
   \newtheorem{lema}[teo]{Lemma}
   \newtheorem{propo}[teo]{Proposition}
   
\theoremstyle{definition}
   
\theoremstyle{remark}
 \newtheorem{obs}{Remark}

\numberwithin{equation}{section}
\numberwithin{teo}{section}
\allowdisplaybreaks

\definecolor{aquamarine}{rgb}{0.5, 1.0, 0.83}
\definecolor{americanrose}{rgb}{1.0, 0.01, 0.24}
\definecolor{arsenic}{rgb}{0.23, 0.27, 0.29}
\definecolor{blizzardblue}{rgb}{0.67, 0.9, 0.93}
\definecolor{blush}{rgb}{0.87, 0.36, 0.51}
\definecolor{celestialblue}{rgb}{0.29, 0.59, 0.82}
\definecolor{chocolate(web)}{rgb}{0.82, 0.41, 0.12}
\definecolor{brightpink}{rgb}{1,0,0.5}
\definecolor{cadmiunred}{rgb}{0.89,0,0.13}

\hypersetup{
	colorlinks = true,
	linkcolor = blue,
	anchorcolor = blue,
	citecolor = blue,
	filecolor = blue,
	urlcolor = blue
}

\begin{document}

\author[R. Ayala]{Rocío Ayala}
	\address{Rocío Ayala, CONICET and Departamento de Matem\'{a}tica (FIQ-UNL), Santa Fe, Argentina.}
	\email{rocioayalazara@gmail.com}
	
\author[F. Berra]{Fabio Berra}
\address{Fabio Berra, CONICET and Departamento de Matem\'{a}tica (FIQ-UNL), Santa Fe, Argentina.}
\email{fberra@santafe-conicet.gov.ar}

\author[G. Pradolini]{Gladis Pradolini}
\address{Gladis Pradolini, CONICET and Departamento de Matem\'{a}tica (FIQ-UNL), Santa Fe, Argentina.}
\email{gladis.pradolini@gmail.com}

\thanks{The authors were supported by Consejo Nacional de Investigaciones Científicas y Técnicas (CONICET), Universidad Nacional del Litoral and Gobierno de la Provincia de Santa Fe (Argentina)}

\subjclass[2020]{42B20, 42B25}	

\keywords{mixed inequalities, commutators, weights}

\begin{abstract}	
We study two-weight weak-type estimates for the operator $S_v f = \mathcal{T}(fv)/v$, where $\mathcal{T}$ is the Hardy-Littlewood maximal operator or a Calderón-Zygmund operator (CZO) and $v$ is a weight. Concretely, under certain conditions on the weights involved, we prove that $S_v$ is bounded from $L^{1}(wv)$ to $L^{1,\infty} (uv)$. We also consider the corresponding inequalities when $\mathcal{T}$ is a higher-order commutator of a CZO. These types of results are inspired by the article of Sawyer in  \cite{S85}, (see also \cite{MW77}).
\end{abstract}
\maketitle

\section{Introduction and main results}
An interesting result proved by E. Sawyer in \cite{S85} states that the inequality
\begin{align}\label{aux 1}
    uv\left(\left\{x\in\mathbb{R}: \frac{M(fv)(x)}{v(x)}>\lambda\right\}\right)\leq \frac{C}{\lambda}\int_{\mathbb{R}}|f(x)|u(x)v(x)\,dx
\end{align}
holds for any $A_1$ weights $u$ and $v$ and every $\lambda>0$, where $M$ is the classical Hardy-Littlewood maximal function. Although the inequality above is similar to a weak $(1,1)$ type estimate for the operator $S_v f=M(fv)/v$ with respect to the measure $d\mu(x)=u(x)v(x)\,dx$, it involves a more subtle meaning. In fact, the operator $S_v$ is a perturbation of $M$ by $v$, and this weight is also related to the measures involved. Therefore the techniques required for the treatment of this estimate turn out to be more sophisticated. Sawyer, inspired  by the techniques of principal cubes in \cite{MW77}, provided a more intricate argument than the classical approach known for weak-type estimates. The main motivation in \cite{S85} for considering this type of estimate is that it leads to an alternative proof of the continuity of $M$ in $L^p(w)$ when $w$ is an $A_p$ weight. Let us observe that, when $v=1$, \eqref{aux 1} shows that $M$ is of weak $(1,1)$ type for $u \in A_1$, which had been previously proved in \cite{MuckenhouptOG}.

Later on, Cruz-Uribe, Martell and Pérez (\cite{CUMPconjecture}) extended \eqref{aux 1} to higher dimensions, while also including Calderón-Zygmund operators (CZOs). The authors prove the estimates not only when $u$ and $v$ are $A_1$ weights, but also when $u\in A_1$ and $v\in A_\infty(u)$. 

An extension to higher-order commutators of CZOs with a BMO symbol was proved in \cite{BCP19}. The result includes the operator itself, which allows the authors to obtain the same estimate as in \cite{CUMPconjecture} by avoiding the usage of an extrapolation result. Related results concerning these types of estimates can be found in  \cite{ABP25}, \cite{B19},  \cite{BCP19a}, \cite{BCP22}, \cite{KOP}, \cite{LOP19} and \cite{ORR25}.

In this article we are interested in two-weight mixed type inequalities for the operator $\mathcal{T}$, when $\mathcal{T}$ is $M$ or a CZO. We prove that, under certain hypotheses on $(u,w)$ and $v$, the operator $S_v f = \mathcal{T}(fv) /v$ is bounded from $L^{1}(wv)$ to $L^{1,\infty}(uv)$. Concretely, we give conditions on $(u,w)$ and $v$ such that the inequality
\[uv\left(\left\{x\in\mathbb{R}: \frac{|\mathcal{T}(fv)(x)|}{v(x)}>\lambda\right\}\right)\leq \frac{C}{\lambda}\int_{\mathbb{R}^n}|f(x)|w(x)v(x)\,dx\]
holds for every $\lambda>0$.

In the case of higher-order commutators, we establish conditions on $(u,w)$ and $v$ such that the estimate
\[uv\left(\left\{x\in\mathbb{R}: \frac{|T_b^m(fv)(x)|}{v(x)}>\lambda\right\}\right)\leq C\int_{\mathbb{R}^n}\Phi_m\left(\frac{|f(x)|}{\lambda}\right)w(x)v(x)\,dx,\]
holds for every positive $\lambda$, where $\Phi_m(t)=t(1+\log^+t)^{m}$. This is an expected result since it is well-known that $T_b^m$ does not satisfy the weak $(1,1) $ type inequality.


In order to state our main results we shall give some definitions that will be useful. 

Let $T$ be a linear operator, bounded on $L^2$ that verifies
\begin{equation}\label{CZO definicion integral}
    Tf(x) =\int_{\mathbb{R}^{n}} K(x-y) f(y) \,dy 
\end{equation}
for $f \in C_{0}^{\infty}$ and $x \notin \text{supp}f$, where 
$K:\mathbb{R}^{n}\backslash \{\textbf{0}\} \to \mathbb{C}$ is a standard kernel, that is a measurable function that satisfies the following size condition
\begin{equation*}\label{condicion de tamanio}
    |K(x)| \leq \frac{C}{|x|^{n}} \hspace{7mm}\text{ for }x \in \mathbb{R}^n    \backslash\{\textbf{0}\}
\end{equation*} 
and the smoothness condition
\begin{equation}\label{condicion de suavidad}
    |K(x-y) - K(x-z)| \leq C \frac{|x-z|}{|x-y|^{n+1}} \hspace{5mm} \text{if } |x-y| > 2 |y-z|.
\end{equation}The operator $T$ is called a Calderón-Zygmund operator (CZO). Given $b \in L^{1}_{\text{loc}}$, the first order commutator of $T$ is formally defined by 
\begin{equation*}
    T_{b} f = [b,T]f= bTf - T(bf).
\end{equation*}
The higher order commutators of $T$ are recursively defined by
\begin{equation*}
    T^{m}_{b}f=[b, T^{m-1}_{b}]f, \hspace{5mm} \hspace{5mm} m \in \mathbb{N}.
\end{equation*} We also understand $T^{0}_{b}= T$.

We say that a locally integrable function $b$ belongs to the bounded mean oscillation space, BMO, if the quantity
\begin{equation*}
    \|b \|_{\text{BMO}}= \sup_{Q}\frac{1}{|Q|} \int_{Q} |b(x) -b_{Q}| \,dx
\end{equation*} is finite, where $\displaystyle b_{Q} = \frac{1}{|Q|} \int_{Q} b(x) \,dx$. 
\vspace{5mm}

The first main result establishes a mixed weak-type inequality with pairs of weights for the Hardy-Littlewood maximal operator. In order to state it, we introduce the following definition. Given a weight $z$, we say that $(u,w)$ belongs to the $A_{1}(z)$-class if there exists a positive constant $C$ that verifies
\begin{equation*}
    \frac{1}{z(Q)} \int_{Q}u(x)z(x)\,dx \leq C \inf_{Q}w
\end{equation*} for every cube $Q \subset \mathbb{R}^{n}$ with sides parallel to the coordinate axes. For more details see Section \ref{section preliminares}.
\begin{teo}\label{teoremamaximal}
   Let $1< q < \infty$, $v \in RH_{\infty} \cap A_{q'}$ and $(u,w) \in A_{1}(v^{1-q})$. Then there exists a positive constant $C$ such that the inequality
    \begin{align*}
        uv\left( \left\{ x \in \mathbb{R}^n : \frac{M(fv)(x)}{v(x)} > \lambda \right\} \right) \leq \frac{C}{\lambda} \int_{\mathbb{R}^n} |f(x)| w(x) v(x) \,dx
    \end{align*} holds for every positive $\lambda$.
\end{teo}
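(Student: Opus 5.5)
The plan is to reduce to a dyadic estimate and then decompose the level set according to the size of $v$. By the one-third trick it suffices to prove the estimate for the dyadic maximal operator $M^{d}$ with respect to a fixed dyadic grid $\mathcal{D}$. By monotone convergence we may also assume $f\ge 0$ is bounded with compact support.

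\textbf{Step 1: recast the hypotheses.} Set $z=v^{1-q}$. Since $q'-1=1/(q-1)$, the condition $v\in A_{q'}$ reads $\big(\frac{1}{|Q|}\int_Q v\big)\big(\frac{1}{|Q|}\int_Q z\big)^{1/(q-1)}\le C$. Equivalently,
\[
z(Q)\le C|Q|\,(v_Q)^{1-q} \qquad \text{for every cube } Q,
\]
where $v_Q$ denotes the average of $v$ over $Q$. The condition $v\in RH_\infty$ gives $\sup_Q v\le C v_Q$. The hypothesis $(u,w)\in A_1(z)$ gives $uz(Q)\le C z(Q)\inf_Q w$. Finally, we always have $uv=uz\,v^{q}$, which is where the exponent $q$ enters.

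\textbf{Step 2: level decomposition.} For $k\in\mathbb{Z}$ let $E_k=\{x: 2^k\le v(x)<2^{k+1},\ M^d(fv)(x)>\lambda 2^k\}$. The level set $E=\{M^d(fv)>\lambda v\}$ is contained in $\bigcup_k E_k$. Let $\{Q^k_j\}_j$ be the maximal dyadic cubes with $\frac{1}{|Q^k_j|}\int_{Q^k_j}fv>\lambda 2^k$, and keep only those cubes meeting $\{v\ge 2^k\}$. For each kept cube, $RH_\infty$ gives $v_{Q^k_j}\ge c\,2^k$. Then
\[
uv(E_k\cap Q^k_j)\le 2^{(k+1)q}\,uz(Q^k_j)\le C\,2^{kq}\,|Q^k_j|\,(v_{Q^k_j})^{1-q}\inf_{Q^k_j}w .
\]
Since $(v_{Q^k_j})^{1-q}\le C2^{k(1-q)}$, this is bounded by $C\,2^{kq}(2^k/v_{Q^k_j})^{q-1}2^{k(1-q)}|Q^k_j|\inf w$. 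Using $|Q^k_j|<(\lambda2^k)^{-1}\int_{Q^k_j}fv$, we get
\[
uv(E_k\cap Q^k_j)\le \frac{C}{\lambda}\Big(\frac{2^k}{v_{Q^k_j}}\Big)^{q-1}\int_{Q^k_j}f\,v\,w .
\]
I keep the decay factor $(2^k/v_{Q^k_j})^{q-1}\le C$ explicitly, because it is needed in Step 3.

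\textbf{Step 3: summation over $k$ (the main obstacle).} The cubes $Q^k_j$ for different $k$ are nested and can overlap unboundedly. Naively summing the bound over $k$ would therefore give $\int fvw$ multiplied by the number of levels, which can be infinite. To handle this, I would run a Sawyer-type principal cube argument in the spirit of \cite{MW77,S85}.

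First, group the cubes according to $v_Q\approx 2^{i}$ with $i\ge k-c$. Cubes with $i\gg k$ are summed using the geometric factor $2^{(k-i)(q-1)}$.

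Second, for the cubes with $v_Q\approx 2^k$, select principal cubes: a cube in $\mathcal{D}$ is declared principal when its average of $fv$ jumps by a fixed factor $a>2^n$ relative to its principal ancestor. For a fixed point $y$, the averages along the principal chain containing $y$ grow geometrically. Meanwhile the levels $2^k$ are tied to $v_Q\le C\inf$-type bounds via $RH_\infty$ and the doubling of $v$ (which holds since $v\in A_{q'}$). Together these should give that each $y$ lies in boundedly many relevant cubes once the geometric weights are taken into account. After summing, this would yield $uv(E)\le \frac{C}{\lambda}\int fvw$.

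I expect this bookkeeping to be the delicate part. Specifically, one has to make sure the weights $\inf_Q w$ can be pulled inside the integral at each step, which holds since $\inf_Q w\le w(y)$ for $y\in Q$. One also has to make sure the sum over nested cubes at comparable $v$-levels is controlled by a Carleson-type packing condition coming from $RH_\infty$.
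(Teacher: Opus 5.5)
Your Steps 1 and 2 are sound. The sets $E_k$ cover the level set up to a null set, the kept maximal cubes cover $E_k$, and the bound $uv(E_k\cap Q^k_j)\le \frac{C}{\lambda}\,(2^k/v_{Q^k_j})^{q-1}\inf_{Q^k_j}w\int_{Q^k_j}fv$ follows exactly as you write.

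The gap is Step 3, and once Step 2 is in place that is where the whole difficulty lies. You need $\sum_{k,j}(2^k/v_{Q^k_j})^{q-1}\chi_{Q^k_j}(y)\le C$ uniformly in $y$, and the principal-cube paragraph does not prove this. No packing condition is formulated, and the claim that each $y$ lies in boundedly many relevant cubes is only asserted (``should give''). The remark that $RH_\infty$ yields ``$v_Q\le C\inf$-type bounds'' is backwards: $RH_\infty$ bounds $\sup_Q v$ by $v_Q$, whereas $v_Q\lesssim\inf_Q v$ is an $A_1$ property that $v$ need not have.

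The missing observation is short, and it makes stopping times unnecessary. Fix $y$ and let $K(y)$ be the set of $k$ such that $y$ lies in a kept cube $Q^k(y)$; this cube is unique, since maximal cubes at one level are disjoint. Suppose $k<k'$ both lie in $K(y)$. Then $Q^{k'}(y)$ has $fv$-average $>\lambda 2^{k'}>\lambda 2^k$, so by maximality $Q^{k'}(y)\subset Q^k(y)$, and $RH_\infty$ gives
\[
v_{Q^{k'}(y)}\le \sup_{Q^{k'}(y)}v\le \sup_{Q^{k}(y)}v\le C\,v_{Q^{k}(y)} .
\]
So $v_Q$ is quasi-decreasing along the chain while $2^k$ increases. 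Combined with $v_{Q^{k'}(y)}\ge c\,2^{k'}$ for kept cubes, this shows that $K(y)$ is bounded above. With $k^*=\max K(y)$, it also gives $(2^k/v_{Q^k(y)})^{q-1}\le C\,2^{(k-k^*)(q-1)}$ for every $k\in K(y)$. The sum is therefore a geometric series bounded by $C/(1-2^{1-q})$, and Tonelli together with $\inf_Q w\le w(y)$ finishes the proof.

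Completed this way, your argument is genuinely different from the paper's. The paper performs a Calder\'on--Zygmund decomposition of $f$ with respect to $v\,dx$ and shows that the dyadic averages of $hv$ vanish off $\Omega$. It controls $uv(\Omega^*)$ through $v^q\in RH_\infty$, $v^{1-q}\in A_1$ and doubling. It handles the good part with the $L^q$ Fefferman--Stein inequality for $M_{v^{1-q}}$, together with the local constancy of $M_{v^{1-q}}u^*$ on each $Q_j$.

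Your route is more elementary. It uses only $\sup_Q v\le Cv_Q$, the $A_{q'}$ bound $v^{1-q}(Q)\le C|Q|\,v_Q^{1-q}$, and $(u,w)\in A_1(v^{1-q})$, with no interpolation and no doubling. However, it is specific to the maximal operator, because it covers the level set directly by maximal dyadic cubes. The paper's good/bad splitting is the template it reuses for Calder\'on--Zygmund operators and their commutators.
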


Let us observe that if we take $u=w$, the theorem above provides us with the mixed weak-type inequality obtained in \cite{CUMPconjecture}, under different conditions on the weights. Indeed, one of the conditions considered in \cite{CUMPconjecture} is given by $u \in A_1$ and $v \in A_{\infty}(u)$. Particularly, the latter implies that $v \in A_{q'}(u)$ for some $q>1$ and, consequently, $v^{1-q} \in A_{q}(u) \subset A_{\infty}(u)$. By applying Proposition \ref{pesos prop u A1 v en A infty u} we get that $u \in A_{1}(v^{1-q}),$ which is the condition given in Theorem \ref{teoremamaximal} for $u=w$, making our hypotheses to be more general in this sense, regardless of the properties on $v$.

On the other hand, if we take $v=1$, Theorem \ref{teoremamaximal} gives the weak-type inequality with pair of weights for $M$ proved in  \cite{MuckenhouptOG}.

The following estimate states a similar result as in the theorem above for Calderón-Zygmund operators and their commutators with BMO symbols. However, the pair of weights involved belongs to a slight perturbation of the $A_{1}$-class in the scale of Orlicz spaces related to a certain Young function $\Psi$ and a weight $z$. This class is called $A_{\Psi}(z)$ and collects the pairs of weights $(u,w)$ such that
\begin{align*}
   \sup_{Q} \|u \|_{\Psi,Q,z} \| w^{-1} \chi_{Q}\|_{\infty} < \infty.
\end{align*}Particularly, we shall consider $\Psi(t) = \Phi_{\alpha}(t)=t \log(1+t)^{\alpha}$, $\alpha \geq 0$  (see Section \ref{section preliminares} for details).
\begin{teo}\label{mixta conmutador (1,1)}
    Let $\varepsilon>0$, $m \in \mathbb{N}_{0}$, $q>1$, $v\in RH_{\infty} \cap A_{q'}$ and $(u,w) \in A_{\Phi_{m+\varepsilon}}(v^{1-q})$. If $T$ is a Calderón-Zygmund operator and $b \in $ BMO, then the inequality 
    \begin{align*}
        uv\left( \left\{ x \in \mathbb{R}^n : \frac{|T_{b}^{m}(fv)(x)|}{v(x)} > \lambda \right\} \right) \leq C \int_{\mathbb{R}^n}  \Phi_{m} \left( \frac{\|b\|_{\text{BMO}}^{m} |f(x)|}{\lambda} \right) w(x) v(x) \,dx
    \end{align*}
  holds for every positive $\lambda$.
\end{teo}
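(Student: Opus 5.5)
We follow the Cruz-Uribe--Martell--Pérez scheme as adapted to commutators in \cite{BCP19}, and we take Theorem \ref{teoremamaximal} and its Orlicz variant as the model. First, by homogeneity we may assume $\|b\|_{\text{BMO}}=1$ and, after replacing $f$ by $f/\lambda$, it is enough to consider $\lambda=1$ with $f$ bounded and compactly supported. We then perform a Calderón--Zygmund decomposition of $fv$ at height $1$ with respect to Lebesgue measure. Alternatively, in the spirit of Sawyer's argument, we can decompose with respect to the measure $v^{1-q}\,dx$, using the fact that $v\in A_{q'}$ gives $v^{1-q}\in A_q$, so that this measure is doubling. This produces disjoint dyadic cubes $Q_j$ with $fv=g+h$, where $h=\sum_j h_j$, each $h_j$ is supported in $Q_j$ and has mean zero, and $g\lesssim 1$.

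The set $\{|T_b^m(fv)|/v>1\}$ is then split into three pieces. The first is the enlarged union $\Omega^*=\bigcup_j 3Q_j$. The second is the set where $|T_b^m g|/v>1/2$. The third is the set where $|T_b^m h|/v>1/2$, intersected with $\mathbb{R}^n\setminus\Omega^*$.

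\textbf{The set $\Omega^*$.} We use $v\in RH_\infty$ together with the definition $(u,w)\in A_{\Phi_{m+\varepsilon}}(v^{1-q})$. Hölder's inequality in Orlicz spaces gives $\frac{1}{v^{1-q}(Q)}\int_Q u\,v^{1-q}\lesssim \|u\|_{\Phi_{m+\varepsilon},Q,v^{1-q}}$, so the pair also lies in $A_1(v^{1-q})$. Together with $RH_\infty$ (which gives $\sup_Q v\lesssim v_Q$) and $A_{q'}$, this lets us compare $uv(3Q_j)$ with $\frac{1}{|Q_j|}\int_{Q_j}|f|v\cdot \inf_{Q_j} w \cdot v_{Q_j}^{-1}\cdot v(Q_j)$. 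This is bounded by $\int_{Q_j}|f|wv$. Summing over $j$ gives the $L^1$ bound, which is dominated by $\int\Phi_m(|f|)wv$.

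\textbf{The good part $g$.} We use Chebyshev's inequality with an $L^p$ two-weight estimate for $T_b^m$, namely $T_b^m:L^p(\tilde w)\to L^p(u v^{1-p})$ for a suitable $p$ close to $1$. The bump $\Phi_{m+\varepsilon}$ is exactly what is needed for this: it lets us dominate $T_b^m$ by sparse operators $\mathcal{A}$ with $\Phi_m$-type averages of the form $\sum_Q \|f\|_{L(\log L)^{k},Q}\,\chi_Q$. Alternatively, we can argue by duality, testing against $u v^{1-p}$ and using $M_{\Phi_{m+\varepsilon}}$ applied with the weight. Since $g\le 1$, the resulting $L^p$ norm is controlled by $\int |g|\,w v^{1-p}$, and this in turn is controlled by $\int |f|wv$ plus the contribution from the cubes $Q_j$, which was already handled in the first step.

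\textbf{The bad part $h$ (the main obstacle).} We expand the commutator as
\[T_b^m h_j=\sum_{k}c_{k}\,(b-b_{Q_j})^{m-k}\,T\bigl((b-b_{Q_j})^{k}h_j\bigr).\]
The term with $k=0$ is $(b-b_{Q_j})^mTh_j$. Off $3Q_j$ we use the smoothness condition \eqref{condicion de suavidad} together with the cancellation of $h_j$, and integrate against $u$ through the relevant $\Phi_m$-average of $u$ over the dilates $2^kQ_j$. The decay $2^{-k}$ absorbs the $k^m$ growth coming from $\|b-b_{Q_j}\|_{\exp L^{1/m}}$ on $2^kQ_j$. For the terms where $(b-b_{Q_j})^k$ falls on $h_j$, there is no cancellation. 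These are treated as new functions: we bound them via the weak-type estimate for $T_b^{m-k}$, which comes from an induction on $m$ with base case the CZO case $m=0$. This requires the local estimate
\[\int_{Q_j}|b-b_{Q_j}|^k|f|v\lesssim |Q_j|\,\|fv\|_{L(\log L)^k,Q_j},\]
and it is precisely here that the Orlicz bump $\Phi_{m+\varepsilon}$ on $u$ and the generalized Hölder inequality pairing $\exp L^{1/m}$ with $L(\log L)^m$ are needed. It is also here that $\Phi_m(|f|)$ appears: the Calderón--Zygmund decomposition must then be done with $\Phi_m$-averages of $fv$. Controlling the weight factor $v$ inside these Luxemburg norms, via $RH_\infty$ (essentially $v\approx v_{Q_j}$ on $Q_j$), is the delicate step.
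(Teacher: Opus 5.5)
Your skeleton is the paper's: an exceptional set, a good part handled by an $L^p$ Fefferman--Stein bound for $T_b^m$, and a bad part handled by kernel smoothness plus induction on $m$ for the pieces without cancellation. But your Calderón--Zygmund decomposition is normalized to the wrong measure, and with it both the good part and the exceptional set fail.

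Decomposing $fv$ at height $1$ with respect to Lebesgue measure only gives $|f|v\le 1$ off $\Omega$, not $|f|\le 1$. The $L^p$ bound produces $\int|g|^p\,M_{\Phi_{m+\varepsilon},v^{1-q}}u\,v^{1-p}$. Off $\Omega$ one has $|g|^pv^{1-p}=|g|\,|f|^{p-1}$, which is not dominated by $|g|$ where $v$ is small. For instance, take $v=|x|^a$ with $a>0$, $u=w=1$, and $fv=\delta\chi_{B(0,r)}$ with $0<\delta<1$. There are no stopping cubes, and the ratio of $\int|g|^pv^{1-p}$ to $\int|f|v$ is of order $(\delta r^{-a})^{p-1}\to\infty$ as $r\to 0$.

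For the exceptional set, the best available bound is $uv(3Q_j)\lesssim v(Q_j)\inf_{3Q_j}w$, which is already sharp for $u=w=1$. The Lebesgue stopping condition $|Q_j|<\int_{Q_j}|f|v$ only yields $v(Q_j)\lesssim v_{Q_j}\int_{Q_j}|f|v$, and the factor $v_{Q_j}$ is uncontrolled (take $fv=2\chi_{Q_0}$ with $Q_0$ far from the origin). Your displayed comparison silently replaces $v(Q_j)$ by $|Q_j|$. The same mismatch is behind the ``delicate step'' you flag in the bad part. Lebesgue $L(\log L)^m$ averages of $fv$ produce $\Phi_m(|f|v)$ rather than $\Phi_m(|f|)v$, and $RH_\infty$ gives only $v\lesssim v_{Q_j}$ on $Q_j$, not $v\approx v_{Q_j}$. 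Decomposing with respect to $v^{1-q}\,dx$ does not repair this, since the stopping condition then controls $v^{1-q}(Q_j)$ instead of $v(Q_j)$.

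The missing idea is to decompose $f$ itself at height $\lambda$ with respect to the doubling measure $v\,dx$. This gives $\lambda<\frac{1}{v(Q_j)}\int_{Q_j}|f|v<C\lambda$ and $|f|\le\lambda$ off $\Omega$, with $g=f^v_{Q_j}$ on $Q_j$ and $h_j=(f-f^v_{Q_j})\chi_{Q_j}$. This is exactly adapted to $f\mapsto T_b^m(fv)/v$:
\begin{itemize}
\item $\int_{Q_j}h_jv=0$ is the cancellation needed for $T(h_jv)$.
\item $|gv|^pv^{1-p}=|g|^pv\le C\lambda^{p-1}|g|v$ closes the good part.
\item $\lambda v(Q_j)<\int_{Q_j}|f|v$ closes the exceptional set.
\item No $\Phi_m$-type stopping is needed, since $\int_{Q_j}|b-b_{Q_j}|^m|f|v\lesssim v(Q_j)\|f\|_{\Phi_m,Q_j,v}\le\lambda v(Q_j)+\lambda\int_{Q_j}\Phi_m(|f|/\lambda)v$.
\end{itemize}

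Your good part also needs one more repair. The cube contribution $\sum_j f^v_{Q_j}\int_{Q_j}M_{\Phi_{m+\varepsilon},v^{1-q}}u\,v$ is not ``already handled'' by the exceptional-set step. Bounding the maximal function by $w$ pointwise there only gives about $\lambda\,wv(Q_j)$, not $\int_{Q_j}|f|wv$. The paper instead applies Theorem \ref{FS fuerte para conmutadores} with $u^*=u\chi_{\mathbb{R}^n\setminus\Omega^*}$. It then uses Lemma \ref{FS:Lema 2.5 Ana}, which says $M_{\Phi,v^{1-q}}u^*$ is essentially constant on $Q_j$, to pass through $\inf_{Q_j}$. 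This is why the dilation is $4\sqrt{n}\,Q_j$ rather than $3Q_j$.
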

If $m=0$ and $v=1$, the inequality above provides us with a two-weight weak $(1,1)$ type estimate with pairs of weights for CZOs. Moreover, the condition $(u,w) \in A_{\Phi_\varepsilon}$ can be seen as the limiting case of the hypotheses on the pair of weights required for the corresponding weak $(p,p)$ inequality proved in \cite{CU-P99}, when $p >1$. Furthermore, if we take $u=w$, then both classes, $ A_{\Phi_{\varepsilon}}$ and $ A_1$, are equivalent. So we also obtain the well-known result related to the one weight weak $(1,1)$ type inequality of $T$ (\cite{javi}).

If $m >0$ and $v=1$, Theorem \ref{mixta conmutador (1,1)} supplies us with a two weight weak type inequality for commutators of CZOs that, as far as we know, has not been previously reported in the classical literature. If we also consider $u=w$, our result implies the corresponding estimate proved in \cite{P95}.

It is easy to see that the pair of weights $\left(u, M_{\Phi_{m+\varepsilon},v^{1-q}} u\right) \in A_{1}(v^{1-q})$. Therefore, Theorem \ref{mixta conmutador (1,1)} provides us with mixed weak inequalities of Fefferman-Stein type for $T_b^m$, $m \geq 0$, which have already been proved in \cite{ABP25}.

\section{Preliminaries and definitions}\label{section preliminares}

We shall understand a weight to be a locally integrable function $w$ that is positive almost everywhere. Given a weight $v$, the $A_{1}(v)$-class is defined as the collection of pairs of weights $(u,w)$ such that there exists a positive constant $C$ that verifies
\begin{equation}\label{claseA1}
    \frac{1}{v(Q)} \int_{Q}u(x)v(x)\,dx \leq C \inf_{Q}w
\end{equation} for every cube $Q \subset \mathbb{R}^{n}$ with sides parallel to the coordinate axes.
For $1<p< \infty$, the $A_{p}(v)$-class is the collection of pairs of weights $(u,w)$ that satisfy
\begin{equation}\label{claseAp}
    \left( \frac{1}{v(Q)} \int_{Q}u(x) v(x) \,dx \right) \left( \frac{1}{v(Q)} \int_{Q} w(x)^{1-p'}v(x)\,dx \right)^{p-1} \leq C
\end{equation} for some positive constant $C$ and for every cube $Q \subset \mathbb{R}^{n}$. When $v=1$, these are the well-known two weighted Muckenhoupt classes and we write $A_{p}(v)=A_p$. We shall denote $u \in A_{p}(v)$ to understand $(u,u)\in A_p(v)$. In that case, we say that $u \in A_{\infty}(v)$ if  $u \in A_p(v)$ for some $p \geq 1$. 

 Given $1<s<\infty$, we say that $v$ belongs to the reverse Hölder class $RH_s $ if there exists a positive constant $C$  such that  
\begin{equation}\label{RHs}
    \left( \frac{1}{|Q|} \int_{Q} v(x)^{s} \,dx  \right)^{1/s} \leq \frac{C}{|Q|} \int_{Q} v(x) \,dx
\end{equation} holds for every cube $Q \subset \mathbb{R}^{n}$. We also shall say that $v \in RH_{\infty}$ if there exists a positive constant $C$ such that the inequality 
\begin{equation}\label{RHinfty}
    \sup_{Q}v \leq \frac{C}{|Q|} \int_{Q}v(x) \,dx
\end{equation} holds for every $Q \subset \mathbb{R}^{n}$.

It can be shown that if $u \in A_p$, then $u$ belongs to an $RH_s$ class, for some $s>1$ (see for example \cite{javi} and \cite{grafakos}). Moreover, every $u \in A_p$ is doubling, that is there exists a positive constant $C$ such that
\begin{align*}
  u(2Q) \leq C u(Q)  
\end{align*}
for every cube $Q$.

The following propositions establish useful relations between classes of weights. Their proofs can be found in \cite{BCP19a} and \cite{CU-N-Reverse}, respectively.
\begin{propo}\label{pesos prop u A1 v en A infty u} If $u \in A_1$, then the following results are valid.
\begin{enumerate}[\rm i)]
\item \label{relacion u in av v in a infty u} If  $v \in A_{\infty}(u)$, then $u \in A_1 (v).$
\end{enumerate}
\end{propo}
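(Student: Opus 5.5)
Assume $u\in A_1$ and $v\in A_\infty(u)$. We want $(u,u)\in A_1(v)$, that is,
\[
\frac{1}{v(Q)}\int_Q uv \le C\inf_Q u \qquad \text{for every cube } Q.
\]
The plan is to bound the $v$-average of $u$ by the Lebesgue average $u_Q=|Q|^{-1}\int_Q u$, and then use $u\in A_1$, which gives $u_Q\le C\inf_Q u$. So everything reduces to showing
\[
\int_Q uv \le C\, u_Q\, v(Q),
\]
i.e. $\frac{uv(Q)}{u(Q)} \le C\frac{v(Q)}{|Q|}$. This says that the $u$-average of $v$ is controlled by the Lebesgue average of $v$.

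To get this, use that $v\in A_\infty(u)$ means $v\in A_p(u)$ for some $p$, that is,
\[
\Big(\frac{1}{u(Q)}\int_Q vu\Big)\Big(\frac{1}{u(Q)}\int_Q v^{1-p'}u\Big)^{p-1}\le C .
\]
By Hölder's inequality with respect to the measure $u\,dx$, the second factor controls the $u$-average of $v$ from below in terms of $\big(\frac{1}{u(Q)}\int_Q v^{1-p'}u\big)^{-(p-1)}$. On the other hand, $|Q|/u(Q)\le 1/\inf_Q u$, and $v$ pairs against $u$ through the $A_1$ condition. Concretely, I would write
\[
v(Q)=\int_Q v\,u^{-1}\,u \;\ge\; \big(\sup_Q u\big)^{-1}uv(Q).
\]
This is too crude, since $\sup_Q u$ is not controlled.

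The better route is Hölder in the opposite direction. Write $|Q|=\int_Q v^{1/p}v^{-1/p}$; this still needs care with the weight $u$. A cleaner alternative is the key step: $v\in A_\infty(u)$ together with $u\in A_\infty$ implies $uv\in A_\infty$, and hence $v\in A_\infty(dx)$-type comparability of averages. I would use the fairness property of $A_\infty(u)$: if $E\subset Q$ with $u(E)\ge \frac12 u(Q)$, then $uv(E)\ge c\,uv(Q)$.

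Choose
\[
E=\{x\in Q:\ u(x)\le 2\,u(Q)/|Q|\}.
\]
By Chebyshev, $|Q\setminus E|\le |Q|/2$. Since $u\in A_\infty$, this gives $u(E)\ge \eta\, u(Q)$ for some fixed $\eta>0$. Then $A_\infty(u)$ gives $uv(Q)\le C\,uv(E)$. On $E$ we have $u\le 2u_Q$, so
\[
uv(E)\le 2u_Q\, v(E)\le 2u_Q\, v(Q).
\]
Combining, $uv(Q)\le C u_Q v(Q)\le C\inf_Q u\, v(Q)$, which is the claim.

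The main obstacle is justifying the step $u(E)\ge\eta u(Q)$ and then $uv(E)\gtrsim uv(Q)$ with a small but fixed proportion rather than one half. For the first, I would use the $A_\infty$ characterization of $u\in A_1$: $|E|/|Q|\ge 1/2$ implies $u(E)/u(Q)\ge\eta$. For the second, I would use the standard equivalence, valid since $u\,dx$ is doubling, that $v\in A_p(u)$ implies $\frac{u(E)}{u(Q)}\le C\big(\frac{uv(E)}{uv(Q)}\big)^{1/p}$. This yields $uv(E)\ge (\eta/C)^p\,uv(Q)$.
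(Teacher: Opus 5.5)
Your final argument is correct and self-contained. With $u_Q=u(Q)/|Q|$ and $E=\{x\in Q: u(x)\le 2u_Q\}$ you get $|E|\ge |Q|/2$. The $A_\infty(u)$ property of $v$ then gives $uv(Q)\le C\,uv(E)$, and $uv(E)\le 2u_Q\,v(Q)\le C\inf_Q u\; v(Q)$, which is exactly $(u,u)\in A_1(v)$.

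Two of your ingredients are simpler than you make them:
\begin{enumerate}[\rm i)]
\item $u(E)\ge |E|\inf_Q u\ge u(Q)/(2C)$ follows directly from the $A_1$ condition, with no appeal to the $A_\infty$ characterization.
\item The bound $\bigl(u(E)/u(Q)\bigr)^p\le C\,uv(E)/uv(Q)$ for $E\subset Q$ is just H\"older's inequality applied to $v^{1/p}v^{-1/p}$ with respect to $u\,dx$, combined with the $A_p(u)$ condition. No doubling of $u\,dx$ is needed.
\end{enumerate}

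The paper gives no proof of this proposition; it only refers to \cite{BCP19a}, so there is no argument to match. Note, however, that the H\"older route you abandoned closes in two lines:
\begin{enumerate}[\rm i)]
\item From $|Q|=\int_Q v^{1/p}v^{-1/p}$ one gets $|Q|^p\le v(Q)\bigl(\int_Q v^{1-p'}\bigr)^{p-1}$.
\item The $A_1$ condition gives $u\ge C^{-1}u_Q$ a.e.\ on $Q$, hence $\int_Q v^{1-p'}\le C u_Q^{-1}\int_Q v^{1-p'}u$.
\item The $A_p(u)$ condition, $\bigl(\int_Q v^{1-p'}u\bigr)^{p-1}\le C\,u(Q)^p/uv(Q)$, then yields $uv(Q)\le C\,u_Q\,v(Q)$ directly.
\end{enumerate}
That version is shorter and gives an explicit constant, $C_u^{p}C_v$, where $C_u$ and $C_v$ are the $A_1$ and $A_p(u)$ constants. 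Your level-set argument uses $v$ only through the measure comparison ``$u(E)\ge\eta\,u(Q)\Rightarrow uv(E)\ge c\,uv(Q)$'', that is, through the qualitative $A_\infty(u)$ property rather than the full $A_p(u)$ condition. That makes it more robust with respect to how $A_\infty(u)$ is defined.

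In a final write-up, remove the exploratory detours: the ``too crude'' bound, the unfinished H\"older attempt, and the unproved and unused claim that $uv\in A_\infty$.
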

\begin{propo}\label{v a la algo en A1}  If $v \in RH_{\infty}$, then $v^{\varepsilon} \in RH_{\infty}$ for $\varepsilon >0$ and $v^{1-q} \in A_{q}$ for some $1<q<\infty$. Moreover, if $v \in RH_{\infty}\cap A_{p}$ for $1 < p < \infty$, then $v^{1-p'} \in A_{1}$.
\end{propo}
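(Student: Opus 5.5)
The plan is to verify the three claims directly from the definitions \eqref{RHinfty} and \eqref{claseAp} with $u=w$ and $v=1$, using the standard facts about Muckenhoupt weights recalled above. The only external ingredient will be $RH_s\subset A_\infty$.

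\emph{First claim: $v^{\varepsilon}\in RH_\infty$.} I split according to $\varepsilon$.
\begin{itemize}
\item If $\varepsilon\ge 1$: by Jensen/Hölder, $\frac{1}{|Q|}\int_Q v\le \bigl(\frac{1}{|Q|}\int_Q v^{\varepsilon}\bigr)^{1/\varepsilon}$. Hence
\[\sup_Q v^{\varepsilon}=\bigl(\sup_Q v\bigr)^{\varepsilon}\le C^{\varepsilon}\Bigl(\frac{1}{|Q|}\int_Q v\Bigr)^{\varepsilon}\le C^{\varepsilon}\frac{1}{|Q|}\int_Q v^{\varepsilon}.\]
\item If $0<\varepsilon<1$: this is the step needing an idea, namely a lower bound for $\frac{1}{|Q|}\int_Q v^{\varepsilon}$. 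Fix $Q$ and set $E=\{x\in Q: v(x)>\frac12 v_Q\}$, with $v_Q$ the average. Then
\[v_Q\le \frac{1}{|Q|}\int_E v+\tfrac12 v_Q\le \frac{|E|}{|Q|}\sup_Q v+\tfrac12 v_Q\le C\frac{|E|}{|Q|}v_Q+\tfrac12 v_Q,\]
so $|E|\ge |Q|/(2C)$. Consequently
\[\frac{1}{|Q|}\int_Q v^{\varepsilon}\ge \frac{|E|}{|Q|}\Bigl(\frac{v_Q}{2}\Bigr)^{\varepsilon}\ge c\,(v_Q)^{\varepsilon}\ge c'\bigl(\sup_Q v\bigr)^{\varepsilon}=c'\sup_Q v^{\varepsilon}.\]
\end{itemize}

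\emph{Second claim: $v^{1-q}\in A_q$ for some $q$.} Since $RH_\infty\subset RH_s$ for every $s>1$, and reverse Hölder weights are $A_\infty$ weights (the classical characterization in \cite{javi}, \cite{grafakos}), we get $v\in A_{p}$ for some $1<p<\infty$. Let $q=p'$. The usual duality $v\in A_{p}\iff v^{1-p'}\in A_{p'}$ gives $v^{1-q}\in A_q$. This is immediate from \eqref{claseAp}, since the $A_q$ product for $v^{1-q}$ is the $A_p$ product for $v$ raised to the power $q-1$.

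\emph{Third claim: $v\in RH_\infty\cap A_p$ implies $v^{1-p'}\in A_1$.} Write $w=v^{1-p'}$; since $1-p'<0$, we have $\inf_Q w=(\sup_Q v)^{1-p'}$.
\begin{itemize}
\item From the $A_p$ condition $v_Q\bigl(\frac{1}{|Q|}\int_Q v^{1-p'}\bigr)^{p-1}\le C$, we get
\[\frac{1}{|Q|}\int_Q w\le C\, v_Q^{-1/(p-1)}=C\,v_Q^{1-p'}.\]
\item By $RH_\infty$, $v_Q\ge C^{-1}\sup_Q v$. Since the exponent $1-p'$ is negative, this gives $v_Q^{1-p'}\le C'(\sup_Q v)^{1-p'}=C'\inf_Q w$.
\end{itemize}
Combining the two bounds yields the $A_1$ condition for $w$.

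The main obstacle is the case $\varepsilon<1$ of the first claim, handled by the level-set estimate above, together with invoking $RH_s\subset A_\infty$ for the second claim. Everything else is a one-line manipulation of the definitions.
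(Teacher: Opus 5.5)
Your proof is correct. It differs from the paper, which gives no proof of this proposition and simply refers it to Cruz-Uribe and Neugebauer \cite{CU-N-Reverse}, where these facts come out of a general study of the structure of the reverse Hölder classes.

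\paragraph{Comparison.} Your route is a self-contained verification from the definitions. Its only non-elementary input is the classical inclusion $RH_s\subset A_\infty$. That input is unavoidable: since $v^{1-q}\in A_q$ is equivalent to $v\in A_{q'}$, the second claim is exactly the statement $v\in A_\infty$. What the citation buys is brevity and context. What your argument buys is a clear view that, apart from $RH_s\subset A_\infty$, everything reduces to Jensen's inequality and the sign of the exponent $1-p'$.

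\paragraph{Simplifications.} Two steps can be shortened.
\begin{enumerate}
\item The case $0<\varepsilon<1$ is not really an obstacle. On $Q$ one has $v^{\varepsilon}=v\,v^{\varepsilon-1}\ge v\,(\sup_Q v)^{\varepsilon-1}$. Averaging and using \eqref{RHinfty} gives $\frac{1}{|Q|}\int_Q v^{\varepsilon}\ge C^{-1}(\sup_Q v)^{\varepsilon}$ in one line. Your level-set argument is valid, just longer. Also, the paper only invokes this part with $\varepsilon=q>1$, which is your Jensen step.
\item The duality step in the second claim can be skipped. Once $v\in A_p$ for some $p$, your third claim already gives $v^{1-p'}\in A_1\subset A_{p'}$. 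This stronger $A_1$ form, with $p=q'$, is what the paper actually uses in the proofs of Theorems \ref{teoremamaximal} and \ref{mixta conmutador (1,1)}.
\end{enumerate}
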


We shall say that $\Phi:[0,\infty) \to [0,\infty)$ is a Young function if there exists a non-zero, non-negative and increasing function $\phi$ that verifies $\lim_{s\to \infty} \phi(s) = \infty$ and $ \displaystyle \Phi(t) = \int_{0}^{t} \phi(u) \,du$. It can be proved that $\Phi$ is continuous, convex and increasing, satisfying $\Phi(0)=0$ and $\displaystyle\lim_{t \rightarrow{\infty}} \Phi(t)/t = \infty$. We also consider $\Phi(t) = t$ as a Young function. Throughout this paper we shall write $\Phi_\varepsilon$ to mean the Young function $\Phi_{\varepsilon}(t)= t (1+\log^{+} t)^{\varepsilon}$, $\varepsilon \geq 0$.  

Given a Young function $\Phi$ we define the complementary function $\tilde{\Phi}$ by
\begin{align*}
    \tilde{\Phi}(t) = \sup \{ ts - \Phi(s) : s \geq 0 \}
\end{align*}
and the generalized inverse by
\begin{align*}
\Phi^{-1}(t) = \inf \{ s \geq 0 : \Phi(s) > t  \}.
\end{align*}

Given a Young function $\Phi$ and a weight $w$, we define the Luxemburg average of $f$ on a cube $Q$ with respect to $w$ by
\begin{equation*}
         \|f \|_{\Phi,Q,w}= \inf \left\{ \lambda >0 : \frac{1}{w(Q)} \int_{Q} \Phi\left(\frac{|f(x)|}{\lambda}\right)w(x) \,dx \leq 1 \right\}.
     \end{equation*} The generalized maximal operator associated to $\Phi$ and $w$ is defined by
     \begin{equation*}
         M_{\Phi, w}f(x) = \sup_{Q \ni x} \| f \|_{\Phi, Q, w},
     \end{equation*} where the supremum is taken over every cube $Q$ containing $x$. When $w=1$, we shall write $M_{\Phi,w}= M_{\Phi}$, the generalized maximal operator associated to $\Phi$. When $\Phi(t)=t$, we denote $M_{\Phi, w}=M_{w}$, the Hardy-Littlewood maximal operator with respect to the measure $d\mu(x) = w(x)dx$. When $\Phi(t)=t^{r}$, with $r>1$, then $M_{\Phi,w}f= M_{r,w}f=M_{w}(|f|^{r})^{1/r}$. Moreover, if  $k \in \mathbb{N}$, we shall also consider the iterated maximal operator $M^{k}_{w}$, which is the composition of $M_{w}$ with itself $k$ times.

Given a Young function $\Psi$ we say that $(u,w) \in A_{\Psi}(v)$ if
\begin{align*}
   \sup_{Q} \|u \|_{\Psi,Q,v} \| w^{-1} \chi_{Q}\|_{\infty} < \infty.
\end{align*} When $\Psi(t)=t$, $A_{\Psi}(v) = A_1 (v)$.

The following generalized Hölder inequality in the setting of Orlicz spaces will be useful through this paper. The proof can be found in \cite{ONeil65}.

\begin{teo}\label{Holder gen}
    Let $w$ be a doubling weight and $\Phi, \Psi $ and $ \Theta$ Young functions such that 
\begin{equation}\label{condicion triplete holder}
        \Phi^{-1}(t) \Psi^{-1}(t) \leq C \Theta^{-1}(t) \hspace{7mm} \text{for } t \geq t_0 \geq 0.
    \end{equation} Then, the inequality 
    \begin{equation*}
        \|fg \|_{\Theta, Q, w} \leq C \| f\|_{\Phi,Q,w} \|g\|_{\Psi,Q,w}
    \end{equation*} holds for every cube $Q \subset \mathbb{R}^n$.
\end{teo}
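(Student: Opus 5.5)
The statement immediately above is the generalized Hölder inequality, Theorem \ref{Holder gen}. I would prove it by the standard Luxemburg-norm argument of O'Neil, carried out in the measure space $(Q, w(x)\,dx/w(Q))$. This is a probability space, so doubling of $w$ plays essentially no role beyond guaranteeing $0<w(Q)<\infty$.

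\textbf{Normalization.} By homogeneity of the Luxemburg averages, I may assume $\|f\|_{\Phi,Q,w}=\|g\|_{\Psi,Q,w}=1$. The cases where either norm vanishes or is infinite are trivial. Left continuity of the averages then gives
\[
\frac{1}{w(Q)}\int_Q\Phi(|f|)\,w\le 1
\qquad\text{and}\qquad
\frac{1}{w(Q)}\int_Q\Psi(|g|)\,w\le 1 .
\]

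\textbf{Key pointwise inequality.} The core step is to show that, for all $s,t\ge 0$,
\[
\Theta\!\left(\frac{st}{C}\right)\le \Phi(s)+\Psi(t)+\Theta(K),
\]
where $K$ is a constant depending on $t_0$, $C$ and the functions. To prove it, set $r=\max\{\Phi(s),\Psi(t)\}$ and consider two cases.

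\emph{Case $r\ge t_0$.} Since $\Phi^{-1}(\Phi(s))\ge s$ and $\Phi^{-1}$ is increasing, we get $s\le \Phi^{-1}(r)$ and $t\le\Psi^{-1}(r)$. Hence \eqref{condicion triplete holder} gives $st\le C\,\Theta^{-1}(r)$, and therefore $\Theta(st/C)\le \Theta(\Theta^{-1}(r))\le r\le \Phi(s)+\Psi(t)$.

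\emph{Case $r<t_0$.} Here $s\le\Phi^{-1}(t_0)$ and $t\le\Psi^{-1}(t_0)$, so $st$ is bounded by a fixed constant $K'$, and the left side is at most $\Theta(K'/C)$.

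The main technical care goes into the generalized inverses: one needs $\Theta(\Theta^{-1}(r))\le r$ (continuity of $\Theta$) and $s\le\Phi^{-1}(\Phi(s))$. The latter can fail on flat parts of $\Phi$, but the case $\Phi(t)=t$ and genuine Young functions are strictly increasing where positive, so the inequality holds. I also need to check that $\Phi^{-1}(t_0)$ is finite, which holds because $\Phi(t)\to\infty$.

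\textbf{Integration.} Integrating the pointwise inequality with $s=|f|$ and $t=|g|$ yields
\[
\frac{1}{w(Q)}\int_Q\Theta\!\left(\frac{|fg|}{C}\right)w\le 2+\Theta(K)=:A .
\]

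\textbf{Absorbing the constant.} If $A\le 1$, the conclusion $\|fg\|_{\Theta,Q,w}\le C$ is immediate. Otherwise I use convexity together with $\Theta(0)=0$, which gives $\Theta(x/A)\le\Theta(x)/A$ for $A\ge1$. Therefore
\[
\frac{1}{w(Q)}\int_Q\Theta\!\left(\frac{|fg|}{CA}\right)w\le 1,
\]
and so $\|fg\|_{\Theta,Q,w}\le CA$. Undoing the normalization gives the theorem with constant $CA$.

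The only genuinely delicate point is the small-values regime $t<t_0$. It is handled by the additive constant $\Theta(K)$, and this works only because the averages are taken over a probability measure.
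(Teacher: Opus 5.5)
Your proof is correct and is essentially the O'Neil argument the paper cites (the paper only records its pointwise by-product \eqref{des Young generalizada}). It extracts a Young-type inequality for $\Theta$ from \eqref{condicion triplete holder}, integrates it against the probability measure $w\,dx/w(Q)$ on $Q$, and renormalizes by convexity. Two minor remarks: with the paper's definition $\Phi^{-1}(t)=\inf\{s\ge 0:\Phi(s)>t\}$ the inequality $s\le\Phi^{-1}(\Phi(s))$ holds for every $s$, flat parts included, so your caveat there is unnecessary; and your version $\Theta(st/C)\le\Phi(s)+\Psi(t)+\Theta(K)$ is the accurate form of what the hypothesis yields, whereas the constant-free \eqref{des Young generalizada} is only guaranteed when $C=1$ and $t_0=0$.
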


From the proof of the generalized Hölder inequality, it follows that if $\Phi, \Psi$ and $\Theta$ satisfy \eqref{condicion triplete holder}, then  \begin{align}\label{des Young generalizada}
    \Theta (ts) \leq \Phi(t) + \Psi(s)
\end{align} for $s, t \geq 0$.

 The proposition below allows us to establish certain order properties between two Young functions and their corresponding Luxemburg averages. Given two Young functions $\Phi$ and $\Psi$ we say that $\Phi$ dominates $\Psi$ at infinity, and we write $\Psi \prec_{\infty} \Phi$, if there exist positive constants $a,b$ and $t_0$ such that $\Psi(t) \leq b \, \Phi (at)$ for every $ t \geq t_0$. 
\begin{propo}\label{dominancia y prom lux}
Let $\Phi$ and $\Psi$ be Young functions such that $\Phi \prec_{\infty} \Psi$. Then there exists a positive constant $C$, depending on $\Phi$ and $\Psi$, such that for every $Q$ and $f$ it holds that  
    \begin{align*}
        \|f \|_{\Phi,Q,w} \leq C \|f \|_{\Psi,Q,w}.
    \end{align*}
\end{propo}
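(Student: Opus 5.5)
The plan is to normalize by the $\Psi$-average, transfer the $\Psi$-bound to a $\Phi$-bound using the dominance hypothesis, and then absorb the resulting constant through convexity of $\Phi$.

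First dispose of the trivial cases. If $\|f\|_{\Psi,Q,w}=\infty$ there is nothing to prove. If $\|f\|_{\Psi,Q,w}=0$, then $f=0$ a.e. on $Q$ with respect to $w$, so $\|f\|_{\Phi,Q,w}=0$ as well. Otherwise fix any $\lambda>\|f\|_{\Psi,Q,w}$. Since $\Psi$ is increasing, the definition of the Luxemburg average gives
\[
\frac{1}{w(Q)}\int_Q \Psi\left(\frac{|f|}{\lambda}\right)w\le 1 .
\]
At the end we let $\lambda\downarrow\|f\|_{\Psi,Q,w}$.

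Next use $\Phi\prec_\infty\Psi$, that is, $\Phi(t)\le b\,\Psi(at)$ for $t\ge t_0$. Split $Q$ according to whether $|f(x)|/(a\lambda)\ge t_0$ or not.
\begin{itemize}
\item On the first set, $\Phi\bigl(|f|/(a\lambda)\bigr)\le b\,\Psi\bigl(|f|/\lambda\bigr)$.
\item On the second set, monotonicity of $\Phi$ gives $\Phi\bigl(|f|/(a\lambda)\bigr)\le\Phi(t_0)$.
\end{itemize}
Averaging over $Q$ with respect to $w$, we get
\[
\frac{1}{w(Q)}\int_Q \Phi\left(\frac{|f|}{a\lambda}\right)w\le b+\Phi(t_0)=:K_0 .
\]

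Finally, set $K=\max\{1,K_0\}$. Convexity of $\Phi$ together with $\Phi(0)=0$ gives $\Phi(s/K)\le \Phi(s)/K$ for $K\ge 1$. Hence
\[
\frac{1}{w(Q)}\int_Q \Phi\left(\frac{|f|}{aK\lambda}\right)w\le \frac{K_0}{K}\le 1,
\]
so $\|f\|_{\Phi,Q,w}\le aK\lambda$. Letting $\lambda\downarrow \|f\|_{\Psi,Q,w}$ yields the claim with $C=a\max\{1,b+\Phi(t_0)\}$. This constant depends only on $\Phi$ and $\Psi$, and not on $Q$, $f$ or $w$.

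There is no real obstacle here. The only point needing care is the region of small values $t<t_0$, where the dominance hypothesis gives no information. It is handled by the bound $\Phi\le\Phi(t_0)$ there, combined with the fact that the averages are normalized by $w(Q)$, so this region contributes at most the constant $\Phi(t_0)$.
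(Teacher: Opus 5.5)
Your proof is correct and follows the paper's argument. Like the paper, you split $Q$ at the level where the normalized $|f|$ crosses $t_0$, bound the $\Phi$-average by $\Phi(t_0)+b$, and absorb the constant by convexity, giving the same constant $a\max\{1,b+\Phi(t_0)\}$. Working with $\lambda>\|f\|_{\Psi,Q,w}$ and then letting $\lambda\downarrow\|f\|_{\Psi,Q,w}$ is a small improvement: the paper tacitly assumes the infimum is attained and divides by $\|f\|_{\Psi,Q,w}$ without treating the case where it vanishes.
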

\begin{proof}
We assume that $\|f\|_{\Psi, Q,w} < \infty$, since in the other case the result is trivial. By hypotheses there exist $a,b$ and $t_{0}$ such that $\Phi(t) \leq b\Psi(at)$ for $t\geq  t_{0}$. 

Fix a cube $Q$ and consider the set
    \begin{align*}
        I = \{x \in Q : |f(x)| \leq a \| f\|_{\Psi,Q,w} \, t_0\}.
    \end{align*}
Then we get that
\begin{align*}
\begin{split}
    \frac{1}{w(Q)} \int_{Q} \Phi \left( \frac{|f(x)|}{a \| f\|_{\Psi,Q,w}} \right)w(x)\,dx & = \frac{1}{w(Q)} \int_{Q\cap I} \Phi \left( \frac{|f(x)|}{a \| f\|_{\Psi,Q,w}} \right) w(x)\,dx \\ & \hspace{10mm}+ \frac{1}{w(Q)} \int_{Q\backslash I} \Phi \left( \frac{|f(x)|}{a \| f\|_{\Psi,Q,w}} \right)w(x)\,dx \\
    &\leq \Phi(t_0) + \frac{b}{w(Q)} \int_{Q\backslash I} \Psi \left( \frac{|f(x)|}{ \| f\|_{\Psi,Q,w}} \right) w(x)\,dx\\ 
    & \leq \Phi(t_0) +b,
    \end{split}
\end{align*} since the last average is bounded by $1$.
If $\Phi(t_0) +b \leq 1$, it follows directly that
\begin{align*}
    \| f\|_{\Phi, Q,w} \leq a \| f\|_{\Psi,Q,w}.
\end{align*} If $\Phi(t_0) +b >1$, we obtain 
\begin{equation*}
    \| f\|_{\Phi, Q,w} \leq a(\Phi(t_0) +b ) \| f\|_{\Psi,Q,w}, 
\end{equation*}since $\Phi$ is convex.
\end{proof}

\begin{propo}\label{Young: igualdad normas a la r}
   Given $r>0$, a weight $w$ and a Young function $\Phi$, we have that \begin{equation*}
        \|f^{r} \|_{\Phi,Q,w} = \|f \|_{\Psi,Q,w}^{r},
    \end{equation*}
    where $\Psi(t)=\Phi(t^{r})$. Particularly, if $r \geq 1$ and $\Phi$ is such that $\Psi(t)=\Phi(t^{1/p})$ is also a Young function, $p\geq 1$, then
    \begin{equation*}
         \|f^{r/p} \|_{\Phi,Q,w} = \|f \|_{\Psi,Q,w}^{r/p},
    \end{equation*}
    where $\Psi(t)=\Phi(t^{r/p})$ is a $r$-Young function.
\end{propo}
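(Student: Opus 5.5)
The plan is to compare the two defining infima directly. We first treat the first identity, $\|f^{r}\|_{\Phi,Q,w} = \|f\|_{\Psi,Q,w}^{r}$ with $\Psi(t)=\Phi(t^r)$. For $\lambda>0$ set $\mu=\lambda^{1/r}$. Since $t\mapsto t^{r}$ is a bijection of $(0,\infty)$, pointwise we have
\begin{equation*}
\Phi\left(\frac{|f(x)|^{r}}{\lambda}\right)=\Phi\left(\left(\frac{|f(x)|}{\mu}\right)^{r}\right)=\Psi\left(\frac{|f(x)|}{\mu}\right).
\end{equation*}
Averaging against $w$ over $Q$, $\lambda$ is admissible in the definition of $\|f^r\|_{\Phi,Q,w}$ if and only if $\mu=\lambda^{1/r}$ is admissible in the definition of $\|f\|_{\Psi,Q,w}$.

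The substitution $\lambda\mapsto\lambda^{1/r}$ is an increasing bijection of the admissible sets, so it carries infimum to infimum. This gives $\|f^r\|_{\Phi,Q,w}=(\|f\|_{\Psi,Q,w})^{r}$, and it holds also when both sides are $0$ or $+\infty$, i.e.\ when the admissible sets are all of $(0,\infty)$ or empty. Note that this argument never uses convexity of $\Psi$. The Luxemburg functional is defined for any nonnegative function, so the identity is valid even when $\Psi$ fails to be a Young function; this is why the statement allows arbitrary $r>0$.

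For the second identity we apply the first one with exponent $r/p$ in place of $r$, which directly yields $\|f^{r/p}\|_{\Phi,Q,w}=\|f\|_{\Psi,Q,w}^{r/p}$ with $\Psi(t)=\Phi(t^{r/p})$. What remains is the qualitative claim that $\Psi$ is an ``$r$-Young function''. We read this as saying that $t\mapsto\Psi(t^{1/r})=\Phi(t^{1/p})$ is a Young function, which is exactly the hypothesis. The same reparametrization then shows that $\|\cdot\|_{\Psi,Q,w}$ behaves like an $r$-th power of a genuine Luxemburg norm, since $\|f\|_{\Psi,Q,w}=\|\,|f|^{r}\|_{\Phi(\cdot^{1/p}),Q,w}^{1/r}$ by the first part applied with exponent $r$.

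The main obstacle is bookkeeping rather than substance. We must make sure the endpoint cases of the infimum are handled, and that no monotonicity or continuity of $\Phi$ beyond what the definition uses is needed. The identity is a pure change of variables in the infimum, so this should be routine.
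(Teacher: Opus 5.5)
Your proposal is correct and follows the paper's proof. Both arguments substitute $\alpha=\lambda^{1/r}$ to identify the admissible sets of the two Luxemburg infima, then obtain the second identity by applying the first with exponent $r/p$, reading the $r$-Young property as the statement that $\Psi(t^{1/r})=\Phi(t^{1/p})$ is a Young function. Your handling of the cases where the infimum is $0$ or $+\infty$ is a small extra care that the paper leaves implicit.
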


\begin{proof}
For the first inequality, if we consider $\alpha=\lambda^{1/r}$ on the definition of the Luxemburg average, we get that
   \begin{align*}
   \begin{split}
    \|f^{r} \|_{\Phi,Q,w} &= \inf \left\{ \lambda >0 : \frac{1}{w(Q)}\int_{Q} \Phi\left( \frac{|f(x)|^{r}}{\lambda} \right)w(x)\,dx \leq 1 \right\} \\
    & = \inf \left\{ \lambda >0 : \frac{1}{w(Q)}\int_{Q} \Phi\left( \left( \frac{|f(x)|}{\lambda^{1/r}} \right)^{r}\, \right)w(x)\,dx \leq 1 \right\} \\
    & =\inf \left\{ \alpha^{r} >0 : \frac{1}{w(Q)}\int_{Q} \Psi\left( \frac{|f(x)|}{\alpha} \right)w(x)\,dx\leq 1 \right\} \\
    &=  \|f \|_{\Psi,Q,w}^{r}.
   \end{split}
   \end{align*}

To prove the second part, it is immediate that $ \Psi(t^{1/r}) = \Phi(t^{1/p})$ is a Young function. Using the first part with exponent $r/p$, it follows that
\begin{equation*}
 \| f^{r/p}\|_{\Phi,Q,w} = \|f \|_{\Psi,Q,w}^{r/p}. \qedhere
\end{equation*}
\end{proof}

The following lemma relates $M_{\Phi}$ and $M_{\Phi, w}$ when $w\in A_{1}$. The proof can be found in \cite{BCP22}.
\begin{lema}\label{lema comparacion puntual maximal phi con peso}
    Let $w\in A_{1}$ and $\Phi$ a Young function. Then there exists a positive constant $C>1$ such that
        \begin{align*}
            \|f \|_{\Phi,Q} \leq C \|f \|_{\Phi,Q,w}
        \end{align*} for every $f$. Moreover,  
        \begin{align*}
            M_{\Phi} f(x) \leq C M_{\Phi,w} f(x).
        \end{align*}
\end{lema}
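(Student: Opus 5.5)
The plan is to prove the second assertion first, since the pointwise inequality for maximal functions follows from the average inequality by taking the supremum over cubes $Q\ni x$. Fix a cube $Q$ and $f$. We may assume $\lambda:=\|f\|_{\Phi,Q,w}<\infty$; otherwise there is nothing to prove. By the definition of the Luxemburg average and the continuity of $\Phi$,
\begin{equation*}
\frac{1}{w(Q)}\int_Q \Phi\left(\frac{|f(x)|}{\lambda}\right)w(x)\,dx\leq 1.
\end{equation*}
The key input from $w\in A_1$ is the pointwise estimate $\frac{w(Q)}{|Q|}\leq C_w \inf_Q w$, which gives $w(x)\geq C_w^{-1}\,w(Q)/|Q|$ for almost every $x\in Q$.

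Using this lower bound inside the integral, we get
\begin{equation*}
\frac{1}{|Q|}\int_Q \Phi\left(\frac{|f(x)|}{\lambda}\right)dx \leq \frac{C_w}{w(Q)}\int_Q \Phi\left(\frac{|f(x)|}{\lambda}\right)w(x)\,dx\leq C_w .
\end{equation*}
Now set $C=\max\{C_w,1\}$, and enlarge it slightly if one insists on $C>1$. Convexity of $\Phi$ together with $\Phi(0)=0$ gives $\Phi(t/C)\leq \Phi(t)/C$ for $C\geq 1$. Hence
\begin{equation*}
\frac{1}{|Q|}\int_Q \Phi\left(\frac{|f(x)|}{C\lambda}\right)dx\leq \frac{1}{C}\,\frac{1}{|Q|}\int_Q\Phi\left(\frac{|f(x)|}{\lambda}\right)dx\leq 1,
\end{equation*}
and therefore $\|f\|_{\Phi,Q}\leq C\lambda = C\|f\|_{\Phi,Q,w}$. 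The constant depends only on the $A_1$ constant of $w$, not on $Q$ or $f$. Taking the supremum over all cubes containing $x$ then yields $M_\Phi f(x)\leq C M_{\Phi,w}f(x)$.

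The only delicate point is attainment of the infimum in the Luxemburg average, i.e.\ that the modular inequality holds at $\lambda$ itself. This can be sidestepped by working with any $\lambda'>\|f\|_{\Phi,Q,w}$ for which the average is $\leq 1$, and then letting $\lambda'\downarrow\|f\|_{\Phi,Q,w}$. For the case $\Phi(t)=t$, which is also allowed as a Young function here, the same computation applies directly.
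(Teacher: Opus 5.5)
Your proof is correct. The $A_1$ bound $w(x)\ge C_w^{-1}w(Q)/|Q|$ a.e.\ on $Q$ turns the weighted modular inequality into $\frac{1}{|Q|}\int_Q\Phi(|f|/\lambda)\,dx\le C_w$. The convexity estimate $\Phi(t/C)\le\Phi(t)/C$ then absorbs $C_w$ into the Luxemburg average, and taking the supremum over $Q\ni x$ gives the maximal inequality. Your opening sentence states the order backwards, but the argument itself correctly derives the second assertion from the first. The paper does not prove this lemma itself (it cites \cite{BCP22}), and your argument is the standard one for this fact.
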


\vspace{7mm}
A dyadic grid $\mathcal{D}$ is a collection of cubes $Q \subset \mathbb{R}^n$ such that
\begin{enumerate}[\rm i)]
    \item $\ell(Q) = 2^{k}$ for some $k \in \mathbb{Z}$,
    \item if $Q, P \in \mathcal{D}$ and $Q \cap P \neq \emptyset$, then $Q \subset P$ or $P\subset Q$,
    \item $D_k = \left\{ Q \in \mathcal{D} : \ell(Q) = 2^{-k} \right\}$ is a partition of $\mathbb{R}^n$ for each $k \in \mathbb{Z}$.
\end{enumerate} Associated to a dyadic grid and a weight $w$, we can define the dyadic maximal operator by 
\begin{align*}
    M_{\Phi,w}^{\mathcal{D}}f(x) = \mathop{\sup_{Q \ni x}}_{ Q \in \mathcal{D}} \|f \|_{\Phi,Q,w}.
\end{align*} It follows immediately that $M_{\Phi,w}^{\mathcal{D}}f(x) \leq M_{\Phi,w}f(x)$ almost everywhere. The next result can be found in \cite{HP-maximaldiadica}.
\begin{teo}\label{teorema maximal diadica}
    There exists dyadic grids $\mathcal{D}^{k}$, $ 1 \leq k \leq 3^n$, such that 
    \begin{align*}
        M f(x) \leq C \sum_{k=1}^{3^n} M^{\mathcal{D}^k}f(x).
    \end{align*}
\end{teo}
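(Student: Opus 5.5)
The plan is to use the ``one-third trick'': construct $3^n$ explicit shifted dyadic grids such that every cube is contained in a cube of one of them of comparable size. Then each average of $|f|$ over a cube $Q\ni x$ is controlled by an average over a dyadic cube containing $x$. For $t=(t_1,\dots,t_n)\in\{0,1,2\}^n$ I would define
\begin{equation*}
\mathcal{D}^{t}=\left\{2^{-k}\left([0,1)^n+m+(-1)^k\tfrac{t}{3}\right): k\in\mathbb{Z},\ m\in\mathbb{Z}^n\right\}.
\end{equation*}
Since this gives $3^n$ grids, I relabel them as $\mathcal{D}^{k}$, $1\le k\le 3^n$.

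\textbf{Step 1: each $\mathcal{D}^t$ is a dyadic grid.} Properties i) and iii) of the definition are immediate. For the nestedness ii), the product structure reduces it to dimension one. Compare the endpoint set of generation $k-1$, namely $2^{-(k-1)}\bigl(\mathbb{Z}+(-1)^{k-1}t/3\bigr)$, with that of generation $k$, namely $2^{-k}\bigl(\mathbb{Z}+(-1)^kt/3\bigr)$. The two shifts differ by
\begin{equation*}
2^{-k}(-1)^k t\left(\tfrac13+\tfrac23\right)=2^{-k}(-1)^kt\in 2^{-k}\mathbb{Z}.
\end{equation*}
Every endpoint at generation $k-1$ is therefore also an endpoint at generation $k$. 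Hence every interval of generation $k$ lies inside a single interval of generation $k-1$, and ii) follows.

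\textbf{Step 2: covering lemma (the main step).} The claim is that for every cube $Q$ there are $t$ and $Q'\in\mathcal{D}^t$ with $Q\subset Q'$ and $\ell(Q')\le 6\ell(Q)$. Again I would argue coordinatewise. Given an interval $I$ (one side of $Q$), choose $k$ with $3\ell(I)\le 2^{-k}<6\ell(I)$. At generation $k$ the three grids $t_i\in\{0,1,2\}$ have endpoint sets $2^{-k}(\mathbb{Z}\pm t_i/3)$. These shifts are distinct modulo $2^{-k}\mathbb{Z}$, so their union is $2^{-k}\tfrac13\mathbb{Z}$, a set of points spaced $2^{-k}/3\ge\ell(I)$.

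Consequently the interior of $I$ contains at most one such point. So for at least one choice of $t_i$ (in fact two), $I$ contains no interior endpoint of generation $k$ of that grid, and $I$ lies in a single interval of length $2^{-k}<6\ell(I)$. Choosing $t_i$ independently for each coordinate gives the cube $Q'$. The delicate points are the bookkeeping with half-open intervals and the alternating sign; the sign is what makes Step 1 work, so I expect this to be where care is needed.

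\textbf{Step 3: conclusion.} Fix $x$ and a cube $Q\ni x$, and take $Q'\in\mathcal{D}^t$ as in Step 2. Then $x\in Q'$ and
\begin{equation*}
\frac{1}{|Q|}\int_Q|f|\le\frac{|Q'|}{|Q|}\frac{1}{|Q'|}\int_{Q'}|f|\le 6^n M^{\mathcal{D}^t}f(x)\le 6^n\sum_{k=1}^{3^n}M^{\mathcal{D}^k}f(x).
\end{equation*}
Taking the supremum over all cubes $Q\ni x$ gives the statement with $C=6^n$.
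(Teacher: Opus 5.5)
Your proposal is correct and is essentially the standard argument: the paper gives no proof of its own and only cites the literature, where the result is obtained by exactly this one-third trick with the grids $2^{-k}\bigl([0,1)^n+m+(-1)^k t/3\bigr)$, the covering property $Q\subset Q'$ with $\ell(Q')\le 6\ell(Q)$, and the resulting constant $6^n$. The only detail worth writing out is the boundary issue you flagged: if cubes are closed, the case $2^{-k}=3\ell(Q)$ still works, because a closed side contains at most two points of $2^{-k}\tfrac13\mathbb{Z}$, these two points necessarily come from different shifts, and so at least one of the three shifts has no endpoint on that side.
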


The following lemma provides a strong $(p,p)$ Fefferman-Stein type inequality for $M_{w}$.
\begin{lema}\label{lema: FS para maximal pesada}
  Let $1<p<\infty$, $u$ a non-negative function and $w$ a doubling weight. Then  \begin{align*}
        \int_{\mathbb{R}^n} |M_{w}f(x)|^{p} u(x) \, w(x)\,dx \leq C \int_{\mathbb{R}^n} |f(x)|^{p} M_{w} u(x) w(x)\,dx.
\end{align*}
\end{lema}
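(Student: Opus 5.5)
The plan is the classical Fefferman–Stein argument, carried out in the measure space $(\mathbb{R}^n, w\,dx)$ with $w$ doubling. This makes $M_w$ behave like the Hardy–Littlewood maximal operator.

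\emph{Reductions.} First, by the monotone convergence theorem we may assume $f$ is bounded with compact support and $u$ is bounded. Next, we prove the weak type $(1,1)$ inequality
\begin{equation*}
uw(\{x: M_w f(x)>\lambda\})\le \frac{C}{\lambda}\int_{\mathbb{R}^n}|f(x)|M_wu(x)\,w(x)\,dx,
\end{equation*}
together with the trivial $L^\infty$ bound
\begin{equation*}
\|M_wf\|_{L^\infty(uw)}\le \|f\|_{L^\infty(M_wu\, w)}.
\end{equation*}
The $L^\infty$ bound holds because $M_wu>0$ wherever $u$ does not vanish identically, so a set that is null for $M_wu\,w$ is null for $w$ (unless $u\equiv0$, a trivial case), and $\|M_wf\|_\infty\le\|f\|_{L^\infty(w)}$. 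Marcinkiewicz interpolation between the two measures $d\mu=uw\,dx$ and $d\nu=M_wu\, w\,dx$, applied to the sublinear operator $M_w$, then gives the strong $(p,p)$ bound for $1<p<\infty$.

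\emph{The weak type estimate.} This is the core step. By doubling of $w$, standard arguments give a Vitali-type covering lemma for $w$-measure: from any family of cubes with bounded size one can extract a disjoint subfamily $\{Q_j\}$ with $\bigcup Q\subset\bigcup 3Q_j$. Alternatively, one can use the dyadic reduction of Theorem \ref{teorema maximal diadica} adapted to $w$. I would use the dyadic route.

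Fix a dyadic grid $\mathcal{D}$. The set $\{M_w^{\mathcal{D}}f>\lambda\}$ is a disjoint union of maximal dyadic cubes $Q_j$ with $\frac{1}{w(Q_j)}\int_{Q_j}|f|w>\lambda$. Maximal cubes exist because $f\in L^1(w)$ and $w(Q)\to\infty$ as $Q$ grows, a consequence of doubling. Then
\begin{equation*}
uw\Big(\bigcup_j Q_j\Big)=\sum_j \frac{uw(Q_j)}{w(Q_j)}\,w(Q_j)\le \frac1\lambda\sum_j \frac{uw(Q_j)}{w(Q_j)}\int_{Q_j}|f|w\le\frac1\lambda\int|f|M_wu\,w,
\end{equation*}
since $\frac{uw(Q_j)}{w(Q_j)}\le M_wu(x)$ for every $x\in Q_j$.

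\emph{Passing from dyadic to non-dyadic.} For a general cube $Q\ni x$, choose among the $3^n$ shifted grids a dyadic cube $P\supset Q$ with $\ell(P)\le 6\ell(Q)$. Doubling gives $w(P)\le Cw(Q)$, so $\frac{1}{w(Q)}\int_Q|f|w\le C\,\frac{1}{w(P)}\int_P|f|w$. Hence $M_wf\le C\sum_k M_w^{\mathcal{D}^k}f$. Each dyadic piece is controlled by the previous estimate, using $M_w^{\mathcal{D}^k}u\le M_wu$, and summing over $k$ gives the weak type bound.

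\emph{Main obstacle.} I expect the main obstacle to be this comparison of $M_w$ with dyadic maximal operators, which needs the doubling constant. The proof of $M\lesssim\sum_k M^{\mathcal{D}^k}$ in Theorem \ref{teorema maximal diadica} uses only the geometric fact that each cube sits inside a comparable dyadic cube of some grid, so it transfers once $w(P)\le Cw(Q)$ is available.

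The interpolation step itself is routine. The only care needed is that the two sides carry different measures, which Marcinkiewicz allows: writing $f=f\chi_{\{|f|>\lambda/2\}}+f\chi_{\{|f|\le\lambda/2\}}$ gives
\begin{equation*}
uw(\{M_wf>\lambda\})\le uw(\{M_w(f\chi_{\{|f|>\lambda/2\}})>\lambda/2\}),
\end{equation*}
because $M_w(f\chi_{\{|f|\le\lambda/2\}})\le\lambda/2$ $w$-a.e., hence $uw$-a.e. Integrating $p\lambda^{p-1}$ against this and applying the weak type bound then yields the stated inequality.
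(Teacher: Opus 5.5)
Your proof is correct and follows the paper's route: Marcinkiewicz interpolation between the bound $L^\infty(M_wu\,w)\to L^\infty(uw)$ and the weak type $(1,1)$ bound from $L^1(M_wu\,w)$ to $L^{1,\infty}(uw)$. The one difference is that the paper gets the weak $(1,1)$ estimate by citing a known result for pairs in $A_1(w)$, using $(u,M_wu)\in A_1(w)$, whereas you prove it directly with maximal dyadic cubes and the $3^n$ shifted grids, which is a valid self-contained substitute.
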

\begin{proof} We shall apply the Marcinkiewicz interpolation theorem with measures $d\mu(x) = M_w u(x)w(x) \,dx $ and $d\nu(x)= u(x)w(x) \,dx$. We first see that $M_{w}$ is bounded from $L^{\infty}(\mu)$ to $L^{\infty}(\nu)$. If $M_{w}u(x) = 0$ for some $x$, then $u = 0$ almost everywhere and the result is trivial. Suppose $M_{w}u(x)>0$ for all $x$, taking $\lambda > \| f\|_{\infty, \mu}$ we get that
\begin{align*}
   \int_{\{ x: |f(x)| > \lambda \} } \,d\mu(x) =  \int_{\{ x: |f(x)| > \lambda \} } M_{w}u (x) w(x) \,dx = 0.
\end{align*} Also, since  $M_{w}u(x)>0$ , then  $w \left(\{ x: |f(x)|>\lambda\} \right) =0.$ Consequently, $|f(x)|< \lambda$  $w-$c.t.p., and it follows that $M_{w}f \leq \lambda$, for every $\lambda > \|f \|_{\infty,\mu}$, and then $M_{w}f \leq \| f\|_{\infty,\mu}$. Therefore, 
\begin{align*}
    \| M_{w}f \|_{\infty, \nu} \leq \|f \|_{\infty, \mu}.
\end{align*}

On the other hand, the weak $(1,1)$ inequality follows from the fact that the pair $(u, M_{w}u) \in A_{1}(w)$ and therefore $M_{w}$ is bounded from $L(\mu)$ to $L(\nu)$ (see \cite{GCM01}).

To finalize the proof, we apply the Marcinkiewicz interpolation theorem and obtain the desired result.
\end{proof}

\begin{obs}\label{FS fuerte para maximal con diadica}
  It is easy to see that the above result is also valid for the dyadic maximal operator $M^{\mathcal{D}}_{w}$ under the same hypotheses on the pair of weights.
\end{obs}
\section{Auxiliary results}
In this section we give some auxiliary results needed in the proof of the main results. The first one is a known result for $BMO$ norms and it can be found, for example, in \cite{javi}.

\begin{lema}\label{obs norma bmo y bmo p}
    Let $1 < p <\infty$, then $\|\cdot \|_{\text{BMO}}$ is equivalent to
\begin{align*}
    \|b\|_{\text{BMO},p}= \sup_{Q} \left(\frac{1}{|Q|} \int_{Q} |b(x) - b_{Q}|^{p} \,dx \right)^{1/p}.
\end{align*}
\end{lema}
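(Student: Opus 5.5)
The inequality $\|b\|_{\text{BMO},p}\ge\|b\|_{\text{BMO}}$ is immediate. Apply Hölder's inequality with exponents $p$ and $p'$ on each cube $Q$ with respect to the normalized measure $dx/|Q|$:
\[\frac{1}{|Q|}\int_Q |b-b_Q|\,dx \le \Big(\frac{1}{|Q|}\int_Q |b-b_Q|^p\,dx\Big)^{1/p}.\]
Taking the supremum over all cubes gives $\|b\|_{\text{BMO}}\le \|b\|_{\text{BMO},p}$.

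For the reverse inequality the main tool is the John–Nirenberg inequality. There exist dimensional constants $c_1,c_2>0$ such that for every cube $Q$ and every $t>0$,
\[|\{x\in Q: |b(x)-b_Q|>t\}|\le c_1 |Q|\, e^{-c_2 t/\|b\|_{\text{BMO}}}.\]
We may assume $\|b\|_{\text{BMO}}>0$; otherwise $b$ is a.e. constant on every cube, hence a.e. constant globally, and both quantities vanish.

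With this inequality in hand, write the $L^p$ average through the layer-cake formula:
\begin{align*}
\frac{1}{|Q|}\int_Q|b-b_Q|^p\,dx &= \frac{p}{|Q|}\int_0^\infty t^{p-1}\,|\{x\in Q:|b-b_Q|>t\}|\,dt\\
&\le c_1 p\int_0^\infty t^{p-1}e^{-c_2t/\|b\|_{\text{BMO}}}\,dt .
\end{align*}
The substitution $s=c_2t/\|b\|_{\text{BMO}}$ turns the last integral into $c_1\,p\,\Gamma(p)\,c_2^{-p}\,\|b\|_{\text{BMO}}^p$. Taking $p$-th roots and the supremum over $Q$ yields $\|b\|_{\text{BMO},p}\le C_{n,p}\|b\|_{\text{BMO}}$, which proves the equivalence.

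The only substantive step is the John–Nirenberg inequality itself. If we are not allowed to quote it, we would prove it by a Calderón–Zygmund decomposition of $|b-b_Q|$ on $Q$ at height $2^n\cdot 2\|b\|_{\text{BMO}}$, as follows.

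1. The stopping cubes $Q_j$ have total measure at most $|Q|/2$.
2. On each $Q_j$ we have $|b_{Q_j}-b_Q|\le 2^{n+1}\|b\|_{\text{BMO}}$.
3. Outside $\bigcup_j Q_j$ the bound $|b-b_Q|\le 2^{n+1}\|b\|_{\text{BMO}}$ holds a.e.

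Iterating this $k$ times inside the stopping cubes gives
\[|\{x\in Q:|b-b_Q|>k\,2^{n+1}\|b\|_{\text{BMO}}\}|\le 2^{-k}|Q|,\]
which is the exponential decay. The rest of the argument is routine.
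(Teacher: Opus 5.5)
Your argument is correct and is the standard one. The paper gives no proof of its own and simply cites a textbook reference, where the equivalence is obtained exactly as you do: Hölder's inequality for one direction, and the layer-cake formula combined with the John--Nirenberg exponential decay for the other.

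There is one cosmetic slip in your sketch of John--Nirenberg. For item 2 to hold with the bound $2^{n+1}\|b\|_{\text{BMO}}$, the stopping height should be $2\|b\|_{\text{BMO}}$ rather than $2^{n+1}\|b\|_{\text{BMO}}$; with the larger height you would get $2^{2n+1}\|b\|_{\text{BMO}}$ instead. This only changes constants.
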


The following results can be found in \cite{P95}.
\begin{propo}\label{BMO valor abs con promedios en 2k Q}
    Given $b \in BMO$, there exists a positive constant $C$ such that 
\begin{align*}
    |b_Q - b_{2^{k}Q}| \leq C k \| b\|_{BMO}
\end{align*} for every $k \in \mathbb{N}$ and every cube $Q$.
\end{propo}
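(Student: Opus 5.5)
The statement to prove is Proposition \ref{BMO valor abs con promedios en 2k Q}: $|b_Q-b_{2^kQ}|\le Ck\|b\|_{\text{BMO}}$ for every $k\in\mathbb{N}$ and every cube $Q$. The plan is to telescope along the chain of dilates $Q\subset 2Q\subset\cdots\subset 2^kQ$ and to control each consecutive difference by a fixed multiple of $\|b\|_{\text{BMO}}$. By the triangle inequality,
\begin{align*}
|b_Q-b_{2^kQ}|\le \sum_{j=0}^{k-1}|b_{2^jQ}-b_{2^{j+1}Q}|,
\end{align*}
so it suffices to show that $|b_P-b_{2P}|\le 2^n\|b\|_{\text{BMO}}$ for every cube $P$. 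Applying this with $P=2^jQ$ for $j=0,\dots,k-1$ then gives the claim with $C=2^n$.

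For the one-step estimate, fix a cube $P$. Since $P\subset 2P$ and $|2P|=2^n|P|$, we have
\begin{align*}
|b_P-b_{2P}|=\left|\frac{1}{|P|}\int_P (b(x)-b_{2P})\,dx\right|\le \frac{1}{|P|}\int_{P}|b(x)-b_{2P}|\,dx\le \frac{2^n}{|2P|}\int_{2P}|b(x)-b_{2P}|\,dx\le 2^n\|b\|_{\text{BMO}}.
\end{align*}
The first equality uses only that $b_{2P}$ is constant, so it can be moved inside the average over $P$.

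There is no real obstacle here; the argument is a two-line computation. The only point needing care is that each enlargement is by the same fixed factor $2$. This keeps the constant $2^n$ uniform in $j$, and that uniformity is what produces linear growth in $k$ rather than anything worse.
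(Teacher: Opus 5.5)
Your proof is correct: the one-step bound $|b_P-b_{2P}|\le 2^n\|b\|_{\text{BMO}}$, telescoped along $Q\subset 2Q\subset\cdots\subset 2^kQ$, gives the claim with $C=2^n$. The paper does not prove this statement itself; it cites Pérez's work, where the result comes from this same telescoping argument.
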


\begin{propo}\label{BMO comparacion norma phi y bmo}
   Let $b \in $ BMO, $\delta \geq 1$ and $\Psi(t)= e^{t^{1/\delta}}-1$. Then there exists a positive constant $C$ such that
   \begin{align*}
        \|b-b_{Q} \|_{\Psi,Q} \leq C \|b \|_{BMO}    \end{align*} holds for every cube $Q$.
\end{propo}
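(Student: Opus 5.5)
The statement is Proposition \ref{BMO comparacion norma phi y bmo}: for $\Psi(t)=e^{t^{1/\delta}}-1$ with $\delta\ge 1$, we want $\|b-b_Q\|_{\Psi,Q}\le C\|b\|_{BMO}$ uniformly in $Q$. The plan is to reduce everything to the John--Nirenberg inequality: there are dimensional constants $c_1,c_2>0$ such that for every cube $Q$ and every $t>0$,
\begin{equation*}
|\{x\in Q: |b(x)-b_Q|>t\}|\le c_1 e^{-c_2 t/\|b\|_{BMO}}|Q|.
\end{equation*}
By homogeneity we may assume $\|b\|_{BMO}=1$, so it suffices to find a $\lambda>0$ depending only on $n$ and $\delta$ with
\begin{equation*}
\frac{1}{|Q|}\int_Q \left(e^{(|b-b_Q|/\lambda)^{1/\delta}}-1\right)dx\le 1 .
\end{equation*}

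\textbf{Step 1: reduce to $\delta=1$.} Set $g=|b-b_Q|/\lambda$. Since $\delta\ge1$, we have $g^{1/\delta}\le 1+g$. Hence
\begin{equation*}
e^{g^{1/\delta}}-1\le \max\{e-1,\,e^{g}-1\}\le (e-1)+(e^{g}-1).
\end{equation*}
The constant $e-1$ exceeds $1$, so we cannot simply add it to the $\delta=1$ bound. Instead, we use that $\Psi(t)\le e^{t}-1$ for $t\ge1$ and that $e^{t^{1/\delta}}-1\le e-1$ for $t\le 1$. This gives $\Psi\prec_\infty \Psi_1$ with $\Psi_1(t)=e^t-1$, and Proposition \ref{dominancia y prom lux} (with $w=1$) then yields $\|f\|_{\Psi,Q}\le C\|f\|_{\Psi_1,Q}$. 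So we only need the case $\delta=1$.

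\textbf{Step 2: the case $\delta=1$.} Write the modular integral via the layer-cake formula and apply John--Nirenberg:
\begin{equation*}
\frac{1}{|Q|}\int_Q \left(e^{|b-b_Q|/\lambda}-1\right)dx=\int_0^\infty \frac{e^{s}}{|Q|}\,|\{x\in Q:|b-b_Q|>\lambda s\}|\,ds\le c_1\int_0^\infty e^{s(1-c_2\lambda)}\,ds=\frac{c_1}{c_2\lambda-1}.
\end{equation*}
This holds for $\lambda>1/c_2$. Choosing $\lambda=(1+c_1)/c_2$ makes the right-hand side at most $1$, so $\|b-b_Q\|_{\Psi_1,Q}\le (1+c_1)/c_2$.

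\textbf{Step 3: conclude.} Combining Steps 1 and 2 and undoing the normalization gives the proposition, with $C$ depending only on $n$ and $\delta$. The only substantive input is the John--Nirenberg inequality, which is standard and which we would quote (for instance from \cite{javi}). The rest is routine; the one point needing care is handling $t\le1$ in Step 1, which is exactly what the dominance at infinity in Proposition \ref{dominancia y prom lux} takes care of.
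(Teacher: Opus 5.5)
Your Step 2 is correct and is the standard argument (John--Nirenberg plus the layer-cake formula). The paper gives no proof of this proposition; it quotes it from P\'erez's 1995 work, which rests on the same computation. The gap is in Step 1. For $\delta>1$ the function $\Psi(t)=e^{t^{1/\delta}}-1$ is not a Young function in the sense of Section~\ref{section preliminares}: near $0$ it behaves like $t^{1/\delta}$, so it is concave there. Proposition~\ref{dominancia y prom lux} is stated for Young functions, and its proof uses convexity of the \emph{dominated} function exactly at the point you say it handles. With all dominance constants equal to $1$, it only yields
\[
\frac{1}{|Q|}\int_Q \Psi\Bigl(\frac{|b-b_Q|}{\|b-b_Q\|_{\Psi_1,Q}}\Bigr)dx\le \Psi(1)+1=e .
\]
It then reaches a modular bounded by $1$ through $\Psi(s/K)\le \Psi(s)/K$. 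That inequality is false for small $s$ when $\delta>1$: the left side is of order $K^{-1/\delta}s^{1/\delta}$ and the right side of order $K^{-1}s^{1/\delta}$. So, as written, the region $t\le 1$ is precisely what is not taken care of.

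The repair is short. Since $x\mapsto e^{x}-1$ is convex and vanishes at $0$, for $K\ge 1$ one has $\Psi(s/K^{\delta})=e^{s^{1/\delta}/K}-1\le \Psi(s)/K$. Rescaling by $e^{\delta}$ instead of $e$ then gives $\|b-b_Q\|_{\Psi,Q}\le e^{\delta}\|b-b_Q\|_{\Psi_1,Q}$. Alternatively, drop the reduction and run Step 2 directly for $\Psi$. With $\|b\|_{BMO}=1$,
\[
\frac{1}{|Q|}\int_Q \Psi\Bigl(\frac{|b-b_Q|}{\lambda}\Bigr)dx=\int_0^\infty e^{s}\,\frac{|\{x\in Q:|b(x)-b_Q|>\lambda s^{\delta}\}|}{|Q|}\,ds\le c_1\int_0^\infty e^{s-c_2\lambda s^{\delta}}\,ds .
\]
The last integral tends to $0$ as $\lambda\to\infty$ by dominated convergence (use $s^{\delta}\ge s$ for $s\ge 1$), so some $\lambda=\lambda(n,\delta)$ makes it at most $1$. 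With either fix your proof is complete.
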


The next lemma establishes that a maximal operator asociated to a Young function is essencially constant if  can be found in \cite{anacomposition}.
\begin{lema}\label{FS:Lema 2.5 Ana}
Let $\Phi$ a Young function, $w$ a doubling weight, $f$ such that $M_{\Phi,w}f(x) < \infty$ almost everywhere, and a fixed cube $Q$. Then there exists a positive constant $C$, depending only on the dimension, such that  
    \begin{align*}
         M_{\Phi,w} (f\chi_{\mathbb{R}^{n}\backslash SQ})(x) \approx M_{\Phi,w} (f\chi_{\mathbb{R}^{n} \backslash SQ}) (y) ,
    \end{align*} for every $x,y \in Q$, where $S = 4 \sqrt{n}.$ 
\end{lema}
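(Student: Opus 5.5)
The approach is the standard one behind Lemma \ref{FS:Lema 2.5 Ana}: compare the cubes that see the far part of $f$ from two points of $Q$. Write $g=f\chi_{\mathbb{R}^n\backslash SQ}$. By symmetry it suffices to show $M_{\Phi,w}g(x)\le C\,M_{\Phi,w}g(y)$ for all $x,y\in Q$.

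Fix $x\in Q$ and a cube $P\ni x$ with $\|g\|_{\Phi,P,w}>0$. Then $P$ must meet $\mathbb{R}^n\backslash SQ$. Since $x\in Q$ and $S=4\sqrt n$, the distance from $x$ to $\mathbb{R}^n\backslash SQ$ is at least $(S-1)\ell(Q)/2$, which is comparable to $\sqrt n\,\ell(Q)$. Because $P$ contains $x$ and a point of the complement, $\operatorname{diam}P=\sqrt n\,\ell(P)\gtrsim \sqrt n\,\ell(Q)$, so $\ell(Q)\le \ell(P)$ (up to a dimensional factor, which the choice $S=4\sqrt n$ is designed to make harmless). Hence the cube $\tilde P=3P$, or a fixed dilate $cP$ with $c$ dimensional, contains $Q$, and in particular contains $y$. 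We also have $P\subset\tilde P$ and $w(\tilde P)\le C_w w(P)$ by doubling.

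The key step is comparing the Luxemburg averages of $g$ on $P$ and on $\tilde P$. Set $\lambda=\|g\|_{\Phi,\tilde P,w}$. Since $\Phi\ge0$ and $P\subset\tilde P$,
\[
\frac{1}{w(P)}\int_P\Phi\Big(\frac{|g|}{\lambda}\Big)w\le \frac{w(\tilde P)}{w(P)}\frac{1}{w(\tilde P)}\int_{\tilde P}\Phi\Big(\frac{|g|}{\lambda}\Big)w\le C_w.
\]
If $C_w\le1$ we are done. Otherwise convexity and $\Phi(0)=0$ give $\Phi(t/C_w)\le\Phi(t)/C_w$, so the average with $C_w\lambda$ in place of $\lambda$ is at most $1$. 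Therefore
\[
\|g\|_{\Phi,P,w}\le C_w\|g\|_{\Phi,\tilde P,w}\le C_w M_{\Phi,w}g(y).
\]
Taking the supremum over $P\ni x$ yields $M_{\Phi,w}g(x)\le C M_{\Phi,w}g(y)$. Exchanging $x$ and $y$ gives the equivalence. Finiteness of $M_{\Phi,w}f$ a.e. only guarantees that the quantities are finite, so that $\approx$ is meaningful.

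The main obstacle is the geometric step: ensuring the dilate of $P$ contains $Q$ with a dilation factor independent of $P$. I would handle it by noting $|x-z|\ge (S-1)\ell(Q)/2$ for $z\notin SQ$, which forces $\ell(P)\ge (S-1)\ell(Q)/(2\sqrt n)\ge \ell(Q)$. Then every point of $Q$ lies within $\sqrt n\,\ell(Q)\le\sqrt n\,\ell(P)$ of $x\in P$, so a dilate $cP$ with $c=1+2\sqrt n$ contains $Q$. The doubling constant is then applied finitely many times depending only on $n$, giving the constant $C$ depending on the dimension and on the doubling constant of $w$.
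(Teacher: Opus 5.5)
Your proof is correct and follows essentially the same route as the paper. Both arguments show that a cube through $x$ meeting $\mathbb{R}^n\setminus SQ$ has side at least comparable to $\ell(Q)$, so a fixed dimensional dilate of it contains $Q$ (the paper uses $SQ_0$, you use $(1+2\sqrt n)P$), and then use doubling of $w$ and convexity of $\Phi$ to compare the Luxemburg averages. Your remark that the constant also depends on the doubling constant of $w$ is accurate; the paper's proof uses doubling in the same way.
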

\begin{proof} Let $x,y \in Q$ and take a cube $Q_{0}$ containing $x$ such that $Q_{0} \cap \mathbb{R}^n \backslash SQ \neq \emptyset$. Denote $x_0$ and $x_{Q}$ the centres of $Q_0$ and $Q$ respectively, and consider $B_{0} = B(x_{0},\sqrt{n}\,l(Q_{0})/2)$. Since $Q_{0} \subset B_{0}$, we get that $B_{0} \cap \mathbb{R}^n \backslash SQ \neq \emptyset$. We shall prove that
\begin{align}\label{lema maximal infimo des auxiliar}
    \frac{3}{4} l(Q) \leq l(Q_{0}).
\end{align} Indeed, it that weren't true, taking $z \in B_{0}$ it follows that
\begin{align*}
    \begin{split}
        |z-x_Q| \leq |z-x| + |x-x_{Q}| \leq \sqrt{n}\, l(Q_{0}) + \frac{\sqrt{n}}{2} l(Q) \leq \frac{3}{4}\sqrt{n}\,l(Q)+ \frac{\sqrt{n}}{2}l(Q)  < \frac{S}{2} l(Q),
    \end{split}
\end{align*} which implies $B_{0} \subset SQ$ and contradicts the fact that $B_{0} \cap \mathbb{R}^n \backslash SQ \neq \emptyset$. Therefore, \eqref{lema maximal infimo des auxiliar} is valid. 

We now prove that $Q \subset S Q_{0}$. Let $z \in Q$, by \eqref{lema maximal infimo des auxiliar},
\begin{align*}
    \begin{split}
        |z-x_{0}| & \leq |z-x|+ |x-x_0 | \\
        & \leq \sqrt{n}\, l(Q) + \frac{\sqrt{n}}{2} l(Q_0) \\
        & \leq \frac{4}{3} \sqrt{n} \,l(Q_0 ) + \frac{\sqrt{n}}{2} l(Q_0) \\ & <\frac{S}{2}l(Q_0),
    \end{split}
\end{align*} which implies that $Q \subset B(x_{0}, S\, l(Q_0)/2 ) \subset S Q_{0} $. 

Since $w$ is doubling, it follows that
\begin{align*}
    \begin{split}
        \frac{1}{w(Q_0)} \int_{Q_0} \Phi \left( \frac{|f|\chi_{\mathbb{R}^n \backslash SQ}}{ \|f \chi_{\mathbb{R}^n \backslash SQ}  \|_{\Phi, SQ_{0}, w}}  \right)w   & \leq \frac{w(SQ_0)}{w(Q_0)} \frac{1}{w(SQ_0)} \int_{S Q_0} \Phi \left( \frac{|f|\chi_{\mathbb{R}^n \backslash SQ}}{ \|f \chi_{\mathbb{R}^n \backslash SQ}  \|_{\Phi, SQ_{0}, w}}  \right)w \\ & \leq C,
    \end{split}
\end{align*} which proves that $\|f \chi_{\mathbb{R}^n \backslash SQ} \|_{\Phi,Q_0 , w} \leq C \|f \chi_{\mathbb{R}^n \backslash SQ}  \|_{\Phi, SQ_{0}, w} \leq C M_{\Phi,w} (f\chi_{\mathbb{R}^n \backslash SQ})(y)$ for every $y \in Q \subset SQ_{0}$ and every cube $Q_0$ containing $x$. Then,
\begin{align*}
    M_{\Phi,w}(f\chi_{\mathbb{R}^n \backslash SQ})(x) \leq C M_{\Phi,w}(f\chi_{\mathbb{R}^n \backslash SQ})(y).
\end{align*}
 The other inequality follows similarly by interchanging the roles of $x$ and $y$.\qedhere
\end{proof}

A key tool to obtain mixed weak type inequalities for Calderón-Zygmund operators and their commutators consists in a strong Fefferman-Stein type inequality respect to a particular weight for the aforementioned operators. Concretely, in \cite{ABP25} the following result was obtained.
\begin{teo}\label{FS fuerte para conmutadores}
   Let $m\in \mathbb{N}\cup \{ 0\}$, $\varepsilon>0$, $q>1$ and $1<p<\min \{q,1+\frac{\varepsilon}{m+1}  \}$. If $v \in RH_{\infty} \cap A_{q'}$, then the inequality
\begin{equation}\label{FS: aux1, 1}
    \int_{\mathbb{R}^n} |T_{b}^{m}f(x)|^{p} w(x) v^{1-p}(x) \, dx \leq C \int_{\mathbb{R}^n} |f(x)|^{p} M_{\Phi_{m+\varepsilon}, v^{1-q}}w(x) v^{1-p}(x) \,dx,   \end{equation} holds for every non-negative and locally integrable function $w$.
\end{teo}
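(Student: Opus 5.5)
We plan to prove Theorem \ref{FS fuerte para conmutadores} by sparse domination and duality. Since $v\in RH_\infty\cap A_{q'}$, Proposition \ref{v a la algo en A1} gives $\sigma:=v^{1-q}\in A_1$. In particular $\sigma$ is doubling, and Lemma \ref{lema comparacion puntual maximal phi con peso} lets us pass from unweighted Luxemburg averages to $\sigma$-averages.

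\textbf{Step 1: sparse reduction.} We take as known the sparse domination of commutators (Lerner--Ombrosi--Rivera-Ríos). It states that for each of the $3^n$ grids $\mathcal D^j$ of Theorem \ref{teorema maximal diadica} there is a sparse family $\mathcal S\subset\mathcal D^j$ with
\[
|T_b^mf(x)|\le C\sum_{k=0}^m\sum_{Q\in\mathcal S}|b(x)-b_Q|^{m-k}\bigl\langle |b-b_Q|^k|f|\bigr\rangle_Q\chi_Q(x).
\]
By duality in $L^p(wv^{1-p})$ it then suffices to bound, for each $k$,
\[
\sum_{Q\in\mathcal S}|Q|\,\bigl\langle |b-b_Q|^k|f|\bigr\rangle_Q\,\bigl\langle |b-b_Q|^{m-k}g\,wv^{1-p}\bigr\rangle_Q,
\]
with $\|g\|_{L^{p'}(wv^{1-p})}=1$.

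\textbf{Step 2: Orlicz Hölder on both averages.} We apply Theorem \ref{Holder gen} together with Proposition \ref{BMO comparacion norma phi y bmo} ($\exp L^{1/k}$ against $L\log L^k$). This controls the two averages by
\[
\|b\|_{BMO}^k\|f\|_{\Phi_k,Q}\quad\text{and}\quad \|b\|_{BMO}^{m-k}\|g\,wv^{1-p}\|_{\Phi_{m-k},Q}.
\]
Next we rewrite the weights in terms of $\sigma$. Since $p<q$, we have $v^{1-p}=\sigma^{(p-1)/(q-1)}$ with exponent $(p-1)/(q-1)<1$. The $RH_\infty$ property of $v$ means $v$ is essentially constant from above on cubes, so $v^{1-p}$ can be moved inside and outside averages up to constants. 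With this we aim to dominate $\|g\,wv^{1-p}\|_{\Phi_{m-k},Q}$ by
\[
\Bigl(\|w\|_{\Phi_{m+\varepsilon},Q,\sigma}\Bigr)^{1/p}\,\bigl\| g\,w^{1/p'}v^{1-p}\bigr\|_{r,Q,\sigma}\times(\text{factor of }v\text{ on }Q),
\]
with $r$ slightly larger than $p'$. This uses Hölder with the three-function condition \eqref{condicion triplete holder}. The restriction $p<1+\varepsilon/(m+1)$ is exactly what should make $\Phi_{m-k}$, combined with the $p$-th root of $\Phi_{m+\varepsilon}$, fit through \eqref{condicion triplete holder}.

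\textbf{Step 3: sparse sums to maximal functions.} Using the sparseness $|Q|\le C|E_Q|$ and Hölder in $L^p\times L^{p'}$, the sum should be bounded by
\[
\Bigl(\int M_{\Phi_k}f^{\,p}\,(M_{\Phi_{m+\varepsilon},\sigma}w)\,v^{1-p}\Bigr)^{1/p}\Bigl(\int M^{\mathcal D}_{r,\sigma}(\cdots)^{p'}\cdots\Bigr)^{1/p'}.
\]
For the second factor we use Lemma \ref{lema: FS para maximal pesada} (with Remark \ref{FS fuerte para maximal con diadica}) for $M_\sigma$ at exponent $p'/r>1$, returning to $\|g\|_{L^{p'}(wv^{1-p})}=1$. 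For the first factor we use the $L^p$ Fefferman--Stein bound for $M_{\Phi_k}$. Here Lemma \ref{FS:Lema 2.5 Ana} and the $A_1$ property of $\sigma$ should handle the non-doubling weight $v^{1-p}$, again via $v^{1-p}=\sigma^{(p-1)/(q-1)}$ and $p<q$.

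\textbf{Main obstacle.} The hardest step will be Step 2: splitting the weights so that only the single bump $M_{\Phi_{m+\varepsilon},\sigma}w$ appears on the right-hand side, while keeping $v^{1-p}$ (not $A_\infty$ in general) under control. The first thing we would try is to use $RH_\infty$ to replace $v$ by its average $v_Q$ on each cube, and then absorb the resulting powers of $v_Q$ with the $A_{q'}$ condition in the form $\sigma\in A_1$.
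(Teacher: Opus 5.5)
This theorem is not proved in the paper; it is quoted from \cite{ABP25}, so there is no internal proof to compare with. On its own merits, your sparse-plus-duality strategy is viable, but Steps 2 and 3 do not close as written.

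\textbf{The bump on the $g$-side.} It cannot be a power bump. You take $r$ slightly larger than $p'$ but then need $p'/r>1$, and neither choice works. For $r\ge p'$, $M_{r,\sigma}$ is unbounded on $L^{p'}(\sigma)$. For $r<p'$, the three-function condition with $t^{1/r}$ forces the companion Young function to grow at least like $t^{r'}$ with $r'>p$. Then $w$ enters through $\langle w^{r'/p}\rangle^{1/r'}$, a power bump that $\|w\|_{\Phi_{m+\varepsilon},Q,\sigma}^{1/p}$ does not control.

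What you need is $B(t)=\Phi_{m+\varepsilon}(t^p)$ together with $A(t)\approx t^{p'}\log(e+t)^{-\gamma p'}$, where $\gamma=\frac{m+\varepsilon}{p}-(m-k)$. This gives $A^{-1}B^{-1}\lesssim\Phi_{m-k}^{-1}$. You also need P\'erez's theorem: $M_{A,\sigma}$ is bounded on $L^{p'}(\sigma)$ when $\int^\infty A(t)t^{-p'}\,dt/t<\infty$. That integrability condition is $\gamma p'>1$, i.e.\ $p(m-k+1)<m+1+\varepsilon$. The worst case $k=0$ is exactly $p<1+\frac{\varepsilon}{m+1}$.

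\textbf{The $f$-factor.} Bounding $\int (M_{\Phi_k}f)^p M_{\Phi_{m+\varepsilon},\sigma}(w)v^{1-p}$ by Fefferman--Stein puts a further maximal operator on that weight. That operator is not controlled by the weight itself; already for $v\equiv1$ it costs a full logarithm. Lemma \ref{FS:Lema 2.5 Ana} concerns functions supported off $SQ$ and does not help here. Instead, the per-cube factor $\|w\|_{\Phi_{m+\varepsilon},Q,\sigma}^{1/p}\le\inf_Q (M_{\Phi_{m+\varepsilon},\sigma}w)^{1/p}$ must go inside the $f$-average before you maximize. Then only the unweighted $L^p(\sigma)$ bound for $M_{\Phi_k,\sigma}$ is needed.

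\textbf{The $v$-bookkeeping.} The part you leave open rests on a false premise. $RH_\infty$ gives $v\le C\langle v\rangle_Q$ on $Q$, hence only a lower bound for $v^{1-p}$. That function can be unbounded on $Q$ (e.g.\ $v(x)=|x|$), so it cannot be pulled out of averages.

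The split that works is $wv^{1-p}=w\,v^{q-p}\,\sigma$, with $\sigma=v^{1-q}$ as the underlying measure.
\begin{enumerate}[\rm i)]
\item Averages convert exactly: $\langle h\rangle_Q=\langle\sigma\rangle_Q\,\langle h\sigma^{-1}\rangle^{\sigma}_Q$, where $\langle\cdot\rangle^{\sigma}_Q$ is the $\sigma$-average.
\item The family is $\sigma$-sparse because $\sigma\in A_1$.
\item John--Nirenberg relative to $\sigma$ handles the factors $|b-b_Q|^{j}$ in the Orlicz--H\"older step, done with respect to $\sigma$.
\item $RH_\infty$ is applied only to the positive power $v^{q-p}$; this is where $p<q$ enters. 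Keep $v^{(q-p)/p'}$ with $gw^{1/p'}$, and bound $v^{(q-p)/p}\le C\langle v\rangle_Q^{(q-p)/p}$.
\item From $RH_\infty$ and $\sigma\in A_1$ one gets $\langle\sigma\rangle_Q\approx\langle v\rangle_Q^{1-q}$. So the leftover constant is $\langle\sigma\rangle_Q\langle v\rangle_Q^{(q-p)/p}\approx\langle v\rangle_Q^{-q/p'}\lesssim\inf_Q v^{-q/p'}$.
\end{enumerate}
With $F=f\sigma^{-1}$, each sparse term is then bounded by
\[
\sigma(Q)\,\bigl\|F\,(M_{\Phi_{m+\varepsilon},\sigma}w)^{1/p}\,v^{-q/p'}\bigr\|_{\Phi_k,Q,\sigma}\,\bigl\|g\,w^{1/p'}v^{(q-p)/p'}\bigr\|_{A,Q,\sigma}.
\]
H\"older in $L^p(\sigma)\times L^{p'}(\sigma)$ then gives exactly $\bigl(\int|f|^pM_{\Phi_{m+\varepsilon},\sigma}w\,v^{1-p}\bigr)^{1/p}\bigl(\int|g|^{p'}wv^{1-p}\bigr)^{1/p'}$.
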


\section{Proof of main results}
\begin{proof}[Proof of Theorem \ref{teoremamaximal}.]
Without loss of generality, since $Mf = M(|f|)$ we can assume $f$ to be non-negative. Also, by Theorem \ref{teorema maximal diadica}, it suffices to prove this result with the dyadic maximal operator $M^{\mathcal{D}}$ associated to a dyadic grid $\mathcal{D}$.

Let us note that since $v\in RH_{\infty}$, the measure $\,d\mu(x) =v(x) \,dx$ is doubling $\mathbb{R}^n$. We consider the Calderón-Zygmund decomposition of $f$ at height $\lambda>0$ with respect to said measure and obtain a family of disjoint dyadic cubes $\left\{Q_j\right\}_{j}$ such that
    \begin{enumerate}[\rm i)]
        \item $f(x) \leq \lambda$ for almost every $x \notin \Omega = \bigcup_{j} Q_j$,
        \item $\displaystyle \lambda <\frac{1}{v(Q_j)} \int_{Q_j} f(x) v(x) \,dx < C\lambda $.
    \end{enumerate}

Let $\displaystyle f_{Q}^{v}=\frac{1}{v(Q_j)} \int_{Q_j} f(x) v(x) \,dx$, $Q_{j}^{*}=RQ_{j}$, with $R>1$, and $\Omega^{*}= \bigcup_{j} Q_{j}^{*}$. We write $f(x)=g(x) + h(x)$, where
 \begin{align*}
        g(x) =\left\{ \begin{array}{c c}
            f(x) & x \notin \Omega \\
            f_{Q_{j}}^{v} &  x \in Q_j
        \end{array}\right.
 \end{align*}
 and
 \begin{align*}
     h(x) = \sum_{j} \left( f(x) - f_{Q_{j}}^{v} \right) \chi_{Q_j}(x) = \sum_{j} h_j(x).
 \end{align*}
It follows directly that $g(x) \leq C \lambda$, for almost every $x \in \mathbb{R}^n $, and $ \int_{Q_j} h_{j} (x) v(x) \,dx =0$ for every $j$.

Given $x$ and $Q \in \mathcal{D}$ that contains $x$, we have that 
 \begin{align*}
 \begin{split}
     \frac{1}{|Q|} \int_{Q} f(y)v(y) \,dy & \leq \frac{1}{|Q|} \int_{Q} g(y)v(y) \,dx + \frac{1}{|Q|} \int_{Q} h(y)v(y) \,dx \\
     & \leq M^{\mathcal{D}}(gv)(x) + \sup_{Q \ni x} \left|  \frac{1}{|Q|} \int_{Q} h(y)v(y) \,dy  \right| .
  \end{split}\end{align*}
Taking supremum on the left hand side over the dyadic cubes in $\mathcal{D}$ that contain $x$, it follows that 
\begin{align*}
    M^{\mathcal{D}}(fv)(x) \leq M^{\mathcal{D}}(gv) (x) + \sup_{Q \ni x}  \left|  \frac{1}{|Q|} \int_{Q} h(y)v(y) \,dy  \right| .
\end{align*} Let
\begin{align*}
    \Tilde{M}^{\mathcal{D}}f(x) = \sup_{Q \ni x} \left|\frac{1}{|Q|} \int_{Q} f(y) \,dy\right|,
\end{align*} therefore
\begin{align*}
    \begin{split}
         uv\left( \left\{ x \in \mathbb{R}^n : \frac{M^{\mathcal{D}}(fv)(x)}{v(x)} > \lambda \right\} \right) & \leq  uv\left( \left\{ x \in \mathbb{R}^n \backslash \Omega^{*}: \frac{M^{\mathcal{D}}(gv)(x)}{v(x)} > \lambda /2 \right\} \right)  +uv(\Omega^{*}) \\ & \hspace{10mm} +  uv\left( \left\{ x \in \mathbb{R}^{n} \backslash \Omega^{*} : \frac{\tilde{M^{\mathcal{D}}}(hv)(x)}{v(x)} > \lambda/2 \right\} \right) \\
         & = I_{1} + I_{2} + I_{3}.
    \end{split}
\end{align*}

To estimate $I_1$, let us note first that Lemma \ref{v a la algo en A1} and Lemma \ref{lema comparacion puntual maximal phi con peso} imply that
 \begin{align}\label{aux M (1,1) 1}
    M^{\mathcal{D}}f(x) \leq M^{\mathcal{D}}_{v^{{1-q}}}f(x) .
 \end{align} By Observation \ref{FS fuerte para maximal con diadica}, we can apply Lemma \ref{lema: FS para maximal pesada} to the dyadic maximal operator. By Tchebyshev inequality, \eqref{aux M (1,1) 1} and Lemma \ref{lema: FS para maximal pesada} we get that
\begin{align*}
    I_{1} & \leq \frac{1}{\lambda^{q}} \int_{\mathbb{R}^n} M^{\mathcal{D}}(gv)(x)^{q} u^{*}(x)v(x)^{1-q} \,dx \\
    &\leq  \frac{C}{\lambda^{q}} \int_{\mathbb{R}^n} M^{\mathcal{D}}_{v^{1-q}}(gv)(x)^{q} u^{*}(x) v(x)^{1-q} \,dx\\
     & \leq \frac{C}{\lambda^q} \int_{\mathbb{R}^n} |g(x)|^q M^{\mathcal{D}}_{v^{1-q}}u^{*} (x) v(x) \,dx,\end{align*} where we have used that $v^{1-q} \in A_{1}$ and $u^{*}= u \chi_{\mathbb{R}^{n} \backslash \Omega^{*}} $. By the properties of $g$, it follows that
     \begin{align*}
         \begin{split}
             I_{1} & \leq  \frac{C}{\lambda} \int_{\mathbb{R}^n } |g(x)| M^{\mathcal{D}}_{v^{1-q}}u^{*}(x) v(x) \,dx \\
     & \leq  \frac{C}{\lambda} \left( \int_{\mathbb{R}^n \backslash \Omega} f(x)M^{\mathcal{D}}_{v^{1-q}}u^{*}(x) v(x) \,dx + \sum_{j}  \frac{1}{v(Q_j)} \int_{Q_{j}} f(x) v(x) \,dx \int_{Q_j} M^{\mathcal{D}}_{v^{1-q}}u^{*}(x) v(x) \,dx\right).
         \end{split}
     \end{align*}
 
Since $(u,w) \in A_{1}(v^{1-q})$, we get that $M^{\mathcal{D}}_{v^{1-q}}u^{*} (x)\leq M_{v^{1-q}}u^{*}(x) \leq M_{v^{1-q}}u(x) \leq C \inf_{Q} w$ for almost every $x \in \mathbb{R}^{n}$, which allows us to estimate the left summand by the desired result. On the other hand, by Lemma \ref{FS:Lema 2.5 Ana} we have that
\begin{align*}\begin{split}
    \frac{1}{v(Q_j)}  \int_{Q_j} f(x) v(x) \,dx \int_{Q_j} M^{\mathcal{D}}_{v^{1-q}}u^{*}(x) v(x)\,dx & \leq \frac{1}{v(Q_j)}  \int_{Q_j} f(x) v(x) \,dx \int_{Q_j} M_{v^{1-q}}u^{*}(x) v(x)\,dx \\ & \leq C  \frac{1}{v(Q_j)}  \int_{Q_j} f(x) v(x) \,dx\,\, v(Q_j) \,\inf_{Q_j} M_{v^{1-q}}u^{*} \\ 
   & \leq C  \int_{Q_j} f(x) v(x) M_{v^{1-q}}u^{*}(x)\,dx \\ &\leq C \int_{Q_j} f(x) w(x) v(x) \,dx.
\end{split}\end{align*}
Since the cubes $Q_j$ are disjoint, finally we get that  
\begin{equation*}
    I_{1} \leq \frac{C}{\lambda}  \int_{\mathbb{R}^n} f(x) w(x) v(x) \,dx.
\end{equation*}

Let us now estimate $I_2$. Proposition \ref{v a la algo en A1} implies that $v^{q} \in RH_{\infty}$ and $v^{1-q} \in A_{1}$, respectively. Applying the hypothesis on the pair $(u,w)$, 
\begin{equation*}
\begin{split}
     I_{2} & \leq \sum_{j}  uv^{1-q}v^{q}\left(Q_{j}^{*} \right) \\
     & \leq \sum_{j}v^{1-q}(Q_{j}^{*}) \sup_{Q_{j}^{*}} v^{q} \frac{1}{v^{1-q}(Q_{j}^{*})} \int_{Q_{j}^{*}} u(x) v(x)^{1-q}\,dx \\
     & \leq C \sum_{j} v^{1-q}(Q_{j}^{*}) \sup_{Q_{j}^{*}} v^{q} \, \, \inf_{Q_{j}^{*}} w \\
     &\leq C \sum_{j}  \frac{v^{1-q}(Q_{j}^{*})}{|Q_{j}^{*}|} \int_{Q_{j}^{*}} v(x)^{q} \,dx \, \, \inf_{Q_{j}^{*}} w  \\
     & \leq C \sum_{j} \inf_{Q_{j}^{*}} v^{1-q} \int_{Q_{j}^{*}} v(x)^{q} \,dx \,\, \inf_{Q_{j}^{*}} w\\
     &  \leq C \sum_{j}\int_{Q_{j}^{*}} v(x) \,dx \,\, \inf_{Q_{j}^{*}} w  \leq C \sum_{j} \int_{Q_j} v(x) \,dx\,\, \inf_{Q_{j}^{*}} w
\end{split}
\end{equation*}
and, by the Calderón-Zygmund decomposition, we get that  
\begin{align*}
    \begin{split}
         I_{2} & \leq \frac{C}{ \lambda} \sum_{j}  \int_{Q_j} f(x)  v(x) \,dx \,\,\inf_{Q_{j}^{*}} w \\
         & \leq \frac{C}{ \lambda} \sum_{j}  \int_{Q_j} f(x) w(x) v(x) \,dx \\
         & \leq \frac{C}{\lambda}\int_{\mathbb{R}^n}  f(x) w(x) v(x) \,dx,
    \end{split} 
\end{align*}
which is the desired estimate.

At last, we prove that $I_3=0$. Indeed, let $x \in \mathbb{R}^{n} \backslash \Omega^*$ and $Q$  a dyadic cube that contains $x$ and intersects $\Omega$, then we either have that $Q_j \subset Q$, or $Q_j \cap Q = \emptyset $. By the property of $h$, it follows that the average 
\begin{align*}
    \left| \frac{1}{|Q|} \int_{Q} h(y) v(y) \,dy \right| = \frac{1}{|Q|} \left|\sum_{j: Q_j \subset Q} \int_{Q_j \cap Q} h(y) v(y) \,dy \right|= 0.
\end{align*}
Therefore, $ \tilde{M}^{\mathcal{
D}}(hv)(x) = 0$ for almost every $x \in \mathbb{R}^n \backslash \Omega^{*}$, from where we get that $I_3 =0.$ 
\end{proof}

\begin{proof}[Proof of Theorem \ref{mixta conmutador (1,1)}]
Without loss of generality, we can assume $f$ non-negative and with compact support. 

As in the previous theorem, we consider the Calderón-Zygmund decomposition of $f$ at height $\lambda >0$ respect to the measure $d\mu(x)=v(x) \,dx$ and obtain a family of disjoint dyadic cubes $\{ Q_j \}_{j}$ such that
    \begin{enumerate}[\rm i)]
        \item $f(x) \leq \lambda$ for almost every $x \notin \Omega= \bigcup_{j} Q_j$,
        \item $\displaystyle \lambda <\frac{1}{v(Q_j)} \int_{Q_j} f(x) v(x) \,dx < C\lambda $.
    \end{enumerate}

   We denote $\displaystyle f_{Q}^{v}=\frac{1}{v(Q_j)} \int_{Q_j} fv $, $Q_{j}^{*}=4\sqrt{n} \,Q_j$, $\Omega^{*}= \bigcup_{j} Q_{j}^{*}$, $x_j$ the center of $Q_j$ and $l_j$ its side length.  As before, we write $f(x)=g(x) + h(x)$, where
    \begin{align*}
        g(x) =\left\{ \begin{array}{c c}
            f(x) & x \notin \Omega \\
            f_{Q_{j}}^{v} &  x \in Q_j
        \end{array}\right.
    \end{align*}
and  
 \begin{align*}
     h(x) = \sum_{j} \left( f(x) -   f_{Q_{j}}^{v} \right) \chi_{Q_j}(x) = \sum_{j} h_j(x).
 \end{align*}

We first prove the case $m=0$. It follows that
\begin{align*}
    \begin{split}
       uv\left( \left\{ x \in \mathbb{R}^n : \frac{|T(fv)(x)|}{v(x)} > \lambda \right\} \right) & \leq uv\left( \left\{ x \in \mathbb{R}^n \backslash \Omega^{*} : \frac{|T(gv)(x)|}{v(x)} >  \frac{\lambda}{2} \right\} \right)+ uv(\Omega^{*}) \\
       & \hspace{10mm} +uv\left( \left\{ x \in \mathbb{R}^n \backslash \Omega^{*}: \frac{|T(hv)(x)|}{v(x)} > \frac{\lambda}{2}\right\} \right) \\
       & = I_{1} + I_{2} + I_{3}.
    \end{split}
\end{align*}

To estimate $I_1$, consider $u^{*}= u \chi_{\mathbb{R}^n \backslash \Omega^{*}}$, then, by Tchebyshev inequality and Theorem \ref{FS fuerte para conmutadores} with $1<s< \min \{q, 1 +\varepsilon \}$, we get that
\begin{align*}
\begin{split}
        I_1 & \leq \frac{C}{\lambda^{s}} \int_{\mathbb{R}^n} |T(gv)(x)|^{s} u^{*}(x)v(x)^{1-s} \,dx \\
& \leq \frac{C}{\lambda^s} \int_{\mathbb{R}^n} |g(x)|^s M_{\Phi_{\varepsilon}, v^{1-q}}u^{*}(x) v(x) \,dx \\
     & \leq \frac{C}{\lambda} \int_{\mathbb{R}^n} |g(x)| M_{\Phi_{\varepsilon}, v^{1-q}}u^{*}(x) v(x) \,dx.\end{split}
   \end{align*} Proceeding as in the proof of Theorem \ref{teoremamaximal} we obtain the desired estimate for $I_1.$

Let us note that since $t \leq \Phi_{\varepsilon}(t)$, the hypotheses on the weights imply that $(u,w) \in A_{1}(v^{1-q})$, which is the condition of Theorem \ref{teoremamaximal}. Therefore, we can follow the steps made in the proof to obtain the desired estimate for $I_2$.

Finally, we estimate the term $I_3$. Since each $h_j$ is supported in $Q_j$, given $x \in \mathbb{R}^{n} \backslash \Omega^{*}$ we have that $T(h_{j}v)(x)$ admits the integral representation \eqref{CZO definicion integral}. Therefore,
\begin{align*}
\begin{split}
     I_3 & \leq   uv\left( \left\{ x \in \mathbb{R}^{n} \backslash \Omega^{*} : \frac{|T(\sum_{j}h_{j}v)(x)|}{v(x)} > \lambda /2 \right\} \right)  \\
     & \leq uv\left( \left\{ x \in \mathbb{R}^{n} \backslash \Omega^{*} : \frac{  \sum_{j} |T(h_{j}v)(x)|}{v(x)} > \lambda /2 \right\} \right) \\
     & \leq \frac{C}{\lambda}  \int_{\mathbb{R}^n \backslash \Omega^{*}}  \sum_{j}|T(h_{j}v)(x)| u(x) \,dx \\
     & = \frac{C}{\lambda}  \int_{\mathbb{R}^n \backslash \Omega^{*}} \sum_{j} \int_{Q_j} |K(x-y) - K(x-x_j)| |h_{j}(y)| v(y) \,dy \, u(x) \,dx.
\end{split}
\end{align*}
Considering that $x \in \mathbb{R}^{n}\backslash \Omega^{*}$ and $y \in Q_j$, it follows that $ |x-x_j| > 2 \sqrt{n}l_j  \geq 2 |x_{j}-y|$. By condition \eqref{condicion de suavidad} and Tonelli theorem, we get that
\begin{align*}
    \begin{split}
       I_3 & \leq\frac{C}{\lambda} \sum_{j}  \int_{\mathbb{R}^n \backslash Q_{j}^{*}} \int_{Q_j} \frac{|y-x_{j}|}{|x-x_{j}|^{n+1}} |h_{j}(y)| v(y) \,dy \, u(x) \,dx  \\
     & \leq \frac{C}{\lambda} \sum_{j}  \int_{Q_j} |y-x_{j}| |h_{j}(y)| v(y) \,dy  \int_{\mathbb{R}^{n}\backslash Q^{*}_{j}} |x-x_{j}|^{-n-1} u(x) \,dx\\&
     \leq \frac{C}{\lambda} \sum_{j}  \int_{Q_j} l_{j} |h_{j}(y)| v(y) \,dy  \int_{\mathbb{R}^{n}\backslash Q^{*}_{j}} |x-x_{j}|^{-n-1} u(x) \,dx.
    \end{split}
\end{align*}
Since $\mathbb{R}^n \backslash Q_{j}^{*} \subset \mathbb{R}^n \backslash B(x_{j},2 \sqrt{n}\,l_j) ,
$ we define the sets
 \begin{align}\label{aux coronas demostracion OCZ}
     A_{j,k}=\{ x : \sqrt{n}\, l_j \, 2^{k} < |x-x_{j}| \leq \sqrt{n}\, l_j \, 2^{k+1} \},
 \end{align} and obtain that
\begin{align*}
\begin{split}
    \int_{\mathbb{R}^{n} \backslash Q^{*}_j}  |x-x_{j}|^{-n-1} u(x) \,dx & \leq \int_{\mathbb{R}^{n} \backslash B(x_{j}, 2\sqrt{n} l_j)} \frac{1}{|x-x_{j}|^{n+1}} u(x) \,dx  \\
    &= \sum_{k=1}^{\infty} \int_{A_{j,k} } \frac{1}{|x-x_{j}|^{n+1}} u(x) \,dx \\
    & \leq C \sum_{k=1}^{\infty}  \frac{1}{(\sqrt{n} l_j 2^{k-1})^{n+1}} \int_{B(x_{j}, 2^{k} \sqrt{n} l_{j})} u(x) \,dx 
    \end{split}\end{align*}
    
   \begin{align*}
       \begin{split}
   \textcolor{white}{ \int_{\mathbb{R}^{n} \backslash Q^{*}_j}  |x-x_{j}|^{-n-1} u(x) \,dx } & \leq C l_{j}^{-1}   \sum_{k=1}^{\infty} 2^{-k} \frac{1}{(\sqrt{n} l_j 2^{k})^{n}}
\int_{B(x_{j}, 2^{k} \sqrt{n} l_j)} u(x)\,dx \\
    & \leq C l_{j}^{-1}\sum_{k=1}^{\infty}  2^{-k} \|u\|_{\Phi_{\varepsilon}, 2^{k+1}\sqrt{n}Q_{j}}\\
    & \leq C l_{j}^{-1} \|u\|_{\Phi_{\varepsilon}, 2^{k+1}\sqrt{n}Q_{j},v^{1-q}}\\
& \leq C l_{j}^{-1}\inf_{Q_j} w,
\end{split}\end{align*}
where we have used Propositions \ref{dominancia y prom lux} and \ref{lema comparacion puntual maximal phi con peso} and the hypotheses on $(u,w)$.

Going back to the estimation of $I_{3}$, definition of $h_j$ allows us to get that  
\begin{align*}
\begin{split}
    I_{3} & \leq \frac{C}{\lambda} \sum_{j} \inf_{Q_j} w\, \int_{Q_j}  |h_{j}(y)| v(y) \,dy 
    \\ &= \frac{C}{\lambda} \sum_{j} \inf_{Q_j} w \, \int_{Q_j} |f(y) - f_{Q_j}^{v}| v(y) \,dy \\
    & \leq \frac{C}{\lambda} \left( \sum_{j}\inf_{Q_j} w\, \int_{Q_j} f(y) v(y) \,dy + \sum_{j} \inf_{Q_j} w\, \frac{1}{v(Q_j)} \int_{Q_j} f(y)v(y) \,dy \,\,v(Q_j)   \right) \\
    & \leq \frac{C}{\lambda}  \left( \sum_{j} \int_{Q_j} f(y) v(y) w(y) \,dy + \sum_{j}   \int_{Q_j} f(y)v(y) w(y) \,dy  \right) \\
    & \leq  \frac{C}{\lambda} \int_{\mathbb{R}^n} f(y) w(y) v(y) \,dy,
\end{split}
\end{align*}
which completes the case $m=0$.

 Let us now consider the case $m=1$. Without loss of generality, we take $b \in BMO$ such that $\| b\|_{\text{BMO}}=1$, since $T_{b/\| b\|_{\text{BMO}}}f = T_{b} (f/\|b \|_{\text{BMO}})$. The Calderón-Zygmund decomposition used at the beginning of this proof yields
    \begin{align*}
\begin{split}
    uv\left( \left\{ x \in \mathbb{R}^n : \frac{|T_{b}(fv)(x)|}{v(x)} > \lambda \right\} \right) &\leq  uv\left( \left\{ x \in \mathbb{R}^n \backslash \Omega^*: \frac{|T_{b}(gv)(x)|}{v(x)} > \frac{\lambda}{2} \right\} \right) + uv(\Omega^{*}) \\
    & \hspace{5mm} + uv\left( \left\{ x \in \mathbb{R}^n \backslash{\Omega^{*}}: \frac{|T_{b}(hv)(x)|}{v(x)} >\frac{\lambda}{2} \right\} \right)\\
    & = I^{1}_{1}+I^{1}_{2}+I^{1}_{3}.
\end{split}
\end{align*} 

We begin by estimating $I^{1}_1$. Let  $u^{*}=u \,\chi_{\mathbb{R}^{n}\backslash\Omega^{*}}$, by Tchebyshev inequality with $1 < p < \min \{q, 1+\frac{\varepsilon}{2} \} $ and Theorem \ref{FS fuerte para conmutadores}, we get that
\begin{align*}
    \begin{split}
        I^{1}_{1} & \leq \frac{C}{\lambda^{p}} \int_{\mathbb{R}^n} |T_{b}(gv)(x)|^{p} u^{*}(x)v(x)^{1-p} \,dx \\
& \leq \frac{C}{\lambda^p} \int_{\mathbb{R}^n} |g(x)|^p M_{\Phi_{1+\varepsilon},v^{1-q}}u^{*}(x) v(x) \,dx  \\
& \leq \frac{C}{\lambda} \int_{\mathbb{R}^n} |g(x)| M_{\Phi_{1+\varepsilon},v^{1-q}}u^{*}(x) v(x) \,dx.
    \end{split}
\end{align*}
Proceeding as in the estimation of $I_1$, from the definition of $g$ it follows that 
\begin{align*}
    \begin{split}
    I_{1}^{1} &\leq \frac{C}{\lambda} \left( \int_{\mathbb{R}^n \backslash \Omega} f(x) M_{\Phi_{1+\varepsilon}, v^{1-q}}u^{*}(x) v(x) \,dx + \sum_{j}  f^{v}_{Q_j} \int_{Q_j} M_{\Phi_{1+\varepsilon}, v^{1-q}}u^{*}(x) v(x) \,dx \right).  
    \end{split}
\end{align*}

Since $u^*$ is supported in the complement of $\Omega^{*}$, applying Lemma \ref{FS:Lema 2.5 Ana} with $y \in Q_j$, we get that
\begin{align*}
    M_{\Phi_{1+\varepsilon}, v^{1-q}}u^{*}(y) \approx\inf_{ Q_j} M_{\Phi_{1+\varepsilon}, v^{1-q}}u^{*}
\end{align*}
and for every $Q_j$ we obtain
\begin{align*}
 \begin{split}
         f^{v}_{Q_j} \int_{Q_j} M_{\Phi_{1+\varepsilon}, v^{1-q}}u^{*}(x) v(x) \,dx &  \leq C f^{v}_{Q_j} \,v(Q_j) \,\inf_{Q_j}  M_{\Phi_{1+\varepsilon}, v^{1-q}}u^{*} \\
         & \leq \int_{Q_{j}} f(x) M_{\Phi_{1+\varepsilon}, v^{1-q}}u^{*}(x) v(x) \,dx.
 \end{split}
\end{align*}
Therefore,
\begin{align*}
    \begin{split}
       I_{1}^{1}& \leq \frac{C}{\lambda}  \int_{\mathbb{R}^{n}\backslash \Omega} f(x) M_{\Phi_{1+\varepsilon}, v^{1-q}}u^{*}(x) v(x) \,dx  +\frac{C}{\lambda} \sum_{j}   \int_{Q_{j}} f(x) M_{\Phi_{1+\varepsilon}, v^{1-q}}u^{*}(x) v(x) \,dx  \\ 
        & \leq C \int_{\mathbb{R}^{n}} \frac{f(x)}{\lambda} M_{\Phi_{1+\varepsilon}, v^{1-q}}u(x) v(x) \,dx \\
        & \leq C\int_{\mathbb{R}^n} \Phi_{1} \left(\frac{|f(x)|}{\lambda} \right)   w (x) v(x) \,dx, 
    \end{split}
\end{align*} where at last we have used that $ t \leq \Phi_{1} (t)$ and the hypotheses on $(u,w)$.

The estimation of $I_{2}^{1}$ follows as in the estimation of $I_2,$ observing that the hypotheses on the weights imply $(u,w) \in A_{1}(v^{1-q})$.

We now estimate the term $I_{3}^{1}$. Since 
\begin{align*}
    T_{b}(hv)(x)= \sum_{j} (b(x)-b_{Q_j}) T(h_{j}v)(x)- \sum_{j} T((b(x)-b_{Q_j})h_{j}v)(x),
\end{align*}
we get that 
\begin{align*}
    \begin{split}
        I_{3}^{1} &\leq  uv\left( \left\{ x \in \mathbb{R}^n\backslash \Omega^{*} : \left|\sum_{j} \frac{(b-b_{Q_j}) T(h_{j}v)(x)}{v(x)}\right| > \frac{\lambda}{4}    \right\} \right) \\ & \hspace{5mm}+  uv\left( \left\{ x \in \mathbb{R}^n\backslash \Omega^{*} : \left|\sum_{j} \frac{T((b-b_{Q_j})h_{j}v)(x)}{v(x)} \right| > \frac{\lambda}{4}   \right\} \right) \\
        & = I_{3,1}^{1} + I_{3,2}^{1}.
    \end{split}
\end{align*} 

 For the term $I_{3,1}^{1}$, by Tchebyshev inequality and \eqref{CZO definicion integral} we obtain
\begin{align*}
    \begin{split}
        I_{3,1}^{1} &\leq \frac{C}{\lambda} \sum_{j}\int_{\mathbb{R}^n \backslash Q_{j}^{*}}|b(x)-b_{Q_j}| T(h_{j}v)(x)u_{j}^{*}(x) \,dx \\
        & \leq \frac{C}{\lambda} \sum_{j}\int_{\mathbb{R}^{n} \backslash Q_{j}^{*}} |b(x)-b_{Q_j}| \left| \int_{Q_j} h_{j}(y)v(y) (K(x-y)-K(x-x_j)) \, dy \right| u^{*}_{j}(x) \,dx\\
        & \leq \frac{C}{\lambda} \sum_{j} \int_{Q_j} |h_{j}(y)| v(y) \int_{\mathbb{R}^{n} \backslash Q_{j}^{*}} |b(x)-b_{Q_j}| |K(x-y)-K(x-x_j) | u^{*}_{j}(x) \,dx \, dy,
    \end{split}
\end{align*} where $u_{j}^{*} = u \chi_{Q_{j}^{*}}$.

Recall the sets $A_{j,k}$ defined in  \eqref{aux coronas demostracion OCZ}, therefore by condition \eqref{condicion de suavidad} we can estimate the inner integral by \begin{align*}
    \begin{split}
        \int_{\mathbb{R}^{n} \backslash Q_{j}^{*}} |b(x)-b_{Q_j}| & |K(x-y)-K(x-x_j) | u^{*}_{j}(x) \,dx \\ &= \sum_{k=1}^{\infty} \int_{A_{j,k}} |b(x)-b_{Q_j}|  |K(x-y)-K(x-x_j) | u^{*}_{j}(x) \,dx \\& \leq \sum_{k=1}^{\infty}\int_{A_{j,k}}  |b(x)-b_{Q_j}| \frac{|y-x_j|}{|x-x_j|^{n+1}} u^{*}_{j}(x) \,dx \\
        &  \leq C  \sum_{k=1}^{\infty} \frac{l_j}{\sqrt{n}l_j 2^k} \frac{1}{(\sqrt{n}l_j 2^{k+1})^n} \int_{B(x_j, \sqrt{n}l_j 2^{k+1})} |b(x)-b_{Q_j}| u^{*}_{j}(x) \,dx  \\
        &\leq C \sum_{k=1}^{\infty}  \frac{2^{-k}}{|\sqrt{n} 2^{k+2} Q_j |} \int_{\sqrt{n} 2^{k+2} Q_j } |b(x)-b_{Q_j}| u^{*}_{j}(x) \,dx.
    \end{split}
\end{align*}

Let $k_0$ be the only integer such that $2^{k_0 -1} \leq \sqrt{n} < 2^{k_0}$. Applying Proposition \ref{BMO valor abs con promedios en 2k Q}, the generalized Hölder inequality and Proposition \ref{BMO comparacion norma phi y bmo} we get that
\begin{align*}
    \begin{split}
    \frac{1}{|\sqrt{n}  2^{k+2} Q_j |} \int_{\sqrt{n}  2^{k+2} Q_j } |b(x)-b_{Q_j}|& u^{*}_{j}(x) \,dx \\& \leq     
    \frac{C}{|2^{k+k_0 +2} Q_j|} \int_{ 2^{k+k_0 +2} Q_j} |b(x)-b_{ 2^{k+k_0 +2} Q_j}| u^{*}_{j}(x) \,dx \\ & \hspace{7mm}+ C (k+ k_0 +2) M u^{*}_{j}(y) \\& \leq  C \| b-b_{ 2^{k+k_0 +2} Q_j} \|_{\tilde{\Phi}_{1+\varepsilon}, \sqrt{n}  2^{k+ k_0 +2} Q_j} \|u^{*}_{j} \|_{\Phi_{1+\varepsilon}, \sqrt{n}  2^{k+ k_0 +2} Q_j} \\
    & \hspace{7mm} + C (k+ k_0 +2) M u^{*}_{j}(y) \\ & \leq C M_{\Phi_{1+\varepsilon}} u^{*}_{j}(y). 
    \end{split}
\end{align*}

Therefore, Lemma \ref{FS:Lema 2.5 Ana} and the hypotheses on $(u,w)$ allow us to obtain  
\begin{equation*}\label{como en I3,1 del teo 222}
    \begin{split}
        I_{3,1}^{1}& \leq \frac{C}{\lambda} \sum_{j} \int_{Q_j} |h_{j}(y)| M_{\Phi_{1+\varepsilon}} u^{*}_{j}(y) v(y) \,dy \\ & \leq \frac{C}{\lambda} 
\sum_j \inf_{Q_j} M_{\Phi_{1+\varepsilon}}u^{*}_{j} \,\, \int_{Q_j} fv + C \sum_{j} \inf_{Q_j} M_{\Phi_{1+\varepsilon}}u^{*}_{j} \,\, \int_{Q_j} f^{v}_{Q_j}v \\
& \leq \frac{C}{\lambda} \int_{\mathbb{R}^n} f(x) M_{\Phi_{1+\varepsilon}}u(x) v(x)\,dx + \frac{C}{\lambda} \sum_{j}  \inf_{Q_j} M_{\Phi_{1+\varepsilon}}u^{*}_{j} \,\,\int_{Q_j} fv \\ & \leq C \int_{\mathbb{R}^n} \frac{f(x)}{\lambda}  w(x) v(x)\,dx \\
& \leq C\int_{\mathbb{R}^n} \Phi_{1} \left(\frac{|f(x)|}{\lambda} \right)  w (x) v(x) \,dx.
    \end{split}
\end{equation*}

 Now we estimate $I_{3,2}^{1}$. Applying the case $m=0$ with the pair $(u^{*},  M_{\Phi_{\varepsilon}, v^{1-q}}u^{*})$ and Lemma \ref{FS:Lema 2.5 Ana}, it follows that
\begin{align*}
    \begin{split}
I_{3,2}^{1} &= u^{*}v\left( \left\{ x \in \mathbb{R}^n : \left|\sum_{j} \frac{T((b-b_{Q_j})h_{j}v)(x)}{v(x)}\right| > \frac{\lambda}{4}   \right\} \right)  \\
& \leq  u^{*}v\left( \left\{ x \in \mathbb{R}^n : \left| \frac{ T \left(\sum_{j} (b-b_{Q_j})h_{j}v\right)(x)}{v(x)} \right| > \frac{\lambda}{4}   \right\} \right) \\
&\leq  \frac{C}{\lambda}\int_{\mathbb{R}^n} \left| \sum_{j} (b(x)-b_{Q_j}) h_{j}(x) \right| M_{\Phi_{\varepsilon}, v^{1-q}}u^{*}(x) v (x) \, dx\\
& \leq  \frac{C}{\lambda} \sum_{j} \int_{Q_j} |b(x)-b_{Q_j}| |h_{j}(x)| M_{\Phi_{\varepsilon}, v^{1-q}}u^{*}(x) v (x) \, dx\\ & \leq \frac{C}{\lambda}  \sum_{j}\inf_{Q_j}M_{\Phi_{\varepsilon}, v^{1-q}}u^{*} \int_{Q_j} |b(x)-b_{Q_j}| |h_{j}(x)| v (x) \, dx.
    \end{split}
\end{align*}

Let us note that, by definition of $h_j$,
\begin{align*}
    \begin{split}
      \int_{Q_j} |b(x)-b_{Q_j}| |h_{j}(x)|  v (x) \, dx & \leq  \int_{Q_j} |b(x) - b_{Q_j}| f(x) v(x) \, dx + f_{Q_{j}}^{v} \int_{Q_j} |b(x) - b_{Q_j}| v(x) \,dx  
    \end{split}
\end{align*}
and, since $v \in RH_{\infty}$, we get that 
\begin{align*}
    \begin{split}
        f_{Q_{j}}^{v} \int_{Q_j} |b(x) - b_{Q_j}| v(x) \,dx &\leq C \frac{v(Q_j)}{|Q_j|} \frac{1}{v(Q_j)} \int_{Q_j} f(x)v(x)\,dx \int_{Q_j} |b(x)-b_{Q_j}|\,dx \\
        & \leq C\int_{Q_j}f(x)v(x)\,dx .
    \end{split}
\end{align*}
On the other hand, by the generalized Hölder inequality (Teorema \ref{Holder gen}) with $\Phi_{1}$ and $\tilde{\Phi}_{1}$ and measure $d\mu(x)  = v(x) \,dx $, we obtain
\begin{align*}
    \begin{split}
        \int_{Q_j} |b(x) - b_{Q_j}| f(x) v(x) \, dx & \leq C v(Q_j)  \|b-b_{Q_j} \|_{\tilde{\Phi}_{1}, Q_{j},v} \|f \|_{\Phi_{1}, Q_j,v} \\
        & \leq C v(Q_j) \|f \|_{\Phi_{1}, Q_j,v} \\ & \leq  Cv(Q_j) \left( \lambda +\frac{\lambda}{v(Q_j)} \int_{Q_j} \Phi_{1} \left(\frac{|f(x)|}{\lambda} \right) v(x)\,dx     \right)\\
        & \leq C \lambda v(Q_j)+ \lambda \int_{Q_j} \Phi_{1} \left( \frac{|f(x)|}{\lambda} \right) v(x)\,dx \\
        & \leq C \int_{Q_j} fv + \lambda \int_{Q_j} \Phi_{1} \left(\frac{|f(x)|}{\lambda} \right) v(x)\,dx .
    \end{split}
\end{align*}

Back to the estimation of $I_{3,2}^{1}$, we arrive at 
\begin{align*}
    \begin{split}
        I_{3,2}^{1} & \leq \frac{C}{\lambda}  \sum_{j}\inf_{Q_j}M_{\Phi_{\varepsilon}, v^{1-q}}u^{*} \int_{Q_j} |b(x)-b_{Q_j}|  v (x) \, dx \\
        &\leq \frac{C}{\lambda}  \sum_{j}\inf_{Q_j}M_{\Phi_{\varepsilon}, v^{1-q}}u^{*} \left(    2   \int_{Q_j}f(x)v(x)\,dx   + \lambda \int_{Q_j} \Phi_{1} \left(\frac{|f(x)|}{\lambda} \right) v(x)\,dx     \right) \\ 
        & \leq C \sum_{j} \frac{1}{\lambda}\int_{Q_j} f(x)M_{\Phi_{\varepsilon}, v^{1-q}}u^{*}(x)v(x)  \,dx + C\sum_{j} \int_{Q_{j} }\Phi_{1} \left(\frac{|f(x)|}{\lambda} \right)M_{\Phi_{\varepsilon}, v^{1-q}}u^{*} (x) v(x)  \,dx.
    \end{split}
\end{align*}
Since $t\leq  \Phi_{1}(t)$ and $M_{\Phi_\varepsilon , v^{1-q}} u^{*} \leq C M_{\Phi_{1+\varepsilon}, v^{1-q}} u \leq w(x)$, it follows that 
\begin{align*}
   I_{3,2}^{1} \leq C\int_{\mathbb{R}^n} \Phi_{1} \left(\frac{|f(x)|}{\lambda} \right)   w (x) v(x) \,dx,
\end{align*}
which completes the proof for the case $m=1$. 

At last, we prove the superior order case by an induction argument. Let $m>1$ and suppose  $T_{b}^{k}$ satisfies the thesis for all $1 \leq k \leq m-1$. Considering the same Calderón-Zygmund decomposition as before, we obtain 
\begin{align*}
    \begin{split}
            uv\left( \left\{ x \in \mathbb{R}^n : \frac{|T^{m}_{b}(fv)(x)|}{v(x)} > \lambda \right\} \right) & \leq uv\left( \left\{ x \in \mathbb{R}^n \backslash \Omega^*: \frac{|T^{m}_{b}(gv)(x)|}{v(x)} > \frac{\lambda}{2} \right\} \right) +uv(\Omega^{*}) \\
    &\hspace{5mm} + uv\left( \left\{ x \in \mathbb{R}^n \backslash{\Omega^{*}}: \frac{|T^{m}_{b}(hv)(x)|}{v(x)} > \frac{\lambda}{2} \right\} \right)\\
    & = I^{m}_{1}+I^{m}_{2}+I^{m}_{3}.
    \end{split}
\end{align*}

To estimate $I^{m}_{1}$ we proceed as in the estimations of $I_1$ and $I_{1}^{1}$ to obtain 
\begin{align*}
    I_{1}^{m} \leq \frac{C}{\lambda^p} \int_{\mathbb{R}^n} |T^{m}_{b}(gv)(x)|^{p} u^{*}(x) v(x)^{1-p}\,dx \leq \frac{C}{\lambda}\int_{\mathbb{R}^n} |g(x)| M_{\Phi_{m+\varepsilon}, v^{1-q}}u^{*}(x)v(x) \,dx,
\end{align*}
where $u^{*}=u \chi_{\mathbb{R}^{n} \backslash \Omega^{*}}$ and we have used Theorem \ref{FS fuerte para conmutadores}. Similarly as before, the hypotheses on the pair $(u,w)$ allows us to get that 
\begin{align*}
    I_{1}^{m} \leq C \int_{\mathbb{R}^n} \Phi_{m} \left( \frac{f(x)}{\lambda}\right) w(x) \, v(x)\,dx.
\end{align*}

Also as before, the estimation for $I_{2}^{m}$ is derived from the fact that the hypotheses on the weights imply that $(u,w) \in A_1 (v^{1-q})$.

To finalize the proof, we estimate $I_{3}^{m}$. By the formula 
\begin{equation*}
    T^{m}_{b}(hv)= \sum_{j}(b-b_{Q_j})^{m}T(h_{j}v) - \sum_{j} T((b - b_{Q_j})^{m} h_{j}v ) - \sum_{j} \sum_{i=1}^{m-1} C_{m,i}\, T_{b}^{i} ((b - b_{Q_j})^{m-i} h_{j}v ) 
\end{equation*} it follows that 
\begin{align*}
\begin{split}
        I_{3}^{m} &\leq uv\left( \left\{   x\in \mathbb{R}^{n} \backslash\Omega^{*} : \left|  \frac{\sum_{j}(b-b_{Q_j})^{m}T(h_{j}v)}{v}\right| >\frac{\lambda}{6} \right\}  \right)  \\ & \hspace{7mm}+uv\left( \left\{   x\in \mathbb{R}^{n} \backslash\Omega^{*} : \left|  \frac{\sum_{j} T((b - b_{Q_j})^{m} h_{j}v )}{v}\right|  >\frac{\lambda}{6}  \right\}  \right)   \\&  \hspace{14mm}+uv\left( \left\{   x\in \mathbb{R}^{n} \backslash\Omega^{*} : \left|  \frac{\sum_{j} \sum_{i=1}^{m-1}  \,T_{b}^{i} ((b - b_{Q_j})^{m-i} h_{j}v )}{v}\right| >\frac{\lambda}{6C_{0}}  \right\}  \right)  \\
        & =I^{m}_{3,1} +  I^{m}_{3,2}  + I^{m}_{3,3},\end{split}
\end{align*}
where $C_0=\max_{i=1}^{m-1} C_{m,i} $. We now estimate each of these terms. 

Applying Tchebyshev inequality, Tonelli Theorem and condition \eqref{condicion de suavidad} we obtain
\begin{align*}
    \begin{split}
         I^{m}_{3,1} & \leq \frac{C}{\lambda} \int_{\mathbb{R}^{n}\backslash \Omega^{*}} \left| \sum_{j} (b(x) - b_{Q_j})^{m} T(h_j v)(x)   \right| u^{*}(x) \,dx \\
         & \leq \frac{C}{\lambda} \sum_j \int_{Q_j} |h_j(y)| v(y) \int_{\mathbb{R}^{n}\backslash Q_{j}^{*}} |b(x) - b_{Q_j}|^{m} |K(x-y) - K(x-x_{Q_j})| u^{*}_{j}(x) \, dx \,dy\\
&  \leq \frac{C}{\lambda} \sum_j \int_{Q_j} |h_j(y)| v(y) \sum_{k=1}^{\infty}
 \int_{A_{j,k}} |b(x) - b_{Q_j}|^{m} \frac{|y-x_{Q_j} |}{|x-x_{Q_j} |^{n+1}} u^{*}_{j}(x) \, dx\, dy,
    \end{split}
\end{align*} where the sets $A_{j,k}$ and $u_{j}^{*}$ are the ones defined before.

Let $k_0$ the only integer such that $2^{k_0 -1} \leq \sqrt{n} <2^{k_0 }$, we get 
{\small \begin{align*}
    \begin{split}
        \sum_{k=1}^{\infty} \int_{A_{j,k}} |b(x) - b_{Q_j}|^{m} \frac{|y-x_j |}{|x-x_j |^{n+1}} u^{*}_{j}(x) \, dx &  \leq C\sum_{k=1}^{\infty}  \frac{2^{-k}}{(\sqrt{n}\, l_j \, 2^{k})^n} \int_{B(x_j , 2^{k+1}\sqrt{n}\, l_j \, )} |b(x) - b_{Q_j}|^{m}  u^{*}_{j}(x) \, dx  \\
        & \leq C \sum_{k=1}^{\infty} \frac{2^{-k}}{|2^{k+k_{0}+2}Q_j|}\int_{2^{k+k_{0}+2}Q_j}  |b(x) - b_{Q_j}|^{m}  u^{*}_{j}(x) \, dx.
    \end{split}
\end{align*}}

Applying Proposition \ref{BMO valor abs con promedios en 2k Q}, the generalized Hölder inequality and Propositions \ref{Young: igualdad normas a la r} and \ref{BMO comparacion norma phi y bmo}, we obtain that  
\begin{align*}
    \begin{split}
     \frac{1}{|2^{k+k_{0}+2}Q_j|}\int_{2^{k+k_{0}+2}Q_j}  |b(x) - b_{Q_j}|^{m}&  u^{*}_{j}(x) \, dx \\&\leq \frac{C}{|2^{k+k_{0}+2}Q_j|} \int_{2^{k+k_{0}+2}Q_j}  |b(x) - b_{2^{k+k_{0}+2} Q_j}|^{m}  u^{*}_{j}(x) \, dx \\ & \hspace{5mm}+ C (k+k_{0}+2)^{m} Mu^{*}_{j}(y) \\
     & \leq C \| |b-b_{2^{k+k_{0}+2} Q_j}|^{m}\|_{\tilde{\Phi}_{m}, 2^{k+k_{0}+2}Q_j} \| u^{*}_{j}\|_{\Phi_{m},2^{k+k_{0}+2}Q_j } \\
     & \hspace{5mm}+ C (k+k_{0}+2)^{m} Mu^{*}_{j}(y) \\
     & \leq C\| b-b_{2^{k+k_{0}+2} Q_j}\|^{m}_{\tilde{\Phi}_{1}, 2^{k+k_{0}+2}Q_j} \| u^{*}_{j}\|_{\Phi_{m},2^{k+k_{0}+2}Q_j } \\
     & \hspace{5mm}+ C (k+k_{0}+2)^{m} Mu^{*}_{j}(y) \\
     & \leq C  M_{\Phi_{m}} u^{*}_{j}(y).
    \end{split}
\end{align*}

By the fact that $M_{\Phi_{m}} u^{*}_{j} \leq M_{\Phi_{m+\varepsilon}} u^{*}_{j} $, it follows
\begin{align*}
    I^{m}_{3,1} \leq \frac{C}{\lambda} \sum_{j} \int_{Q_j} |h_{j}(y)|M_{\Phi_{m+\varepsilon}} u^{*}_{j}(y) \, v(y) \, dy
\end{align*} and proceeding in a similar way as done with $I_{3,1}^{1}$, we get the desired estimate. 

Applying the case proved for $m=0$ with the pair of weights $(u^{*}, M_{\Phi_{m+\varepsilon},v^{1-q}}u^{*})$, we can estimate  $I_{3,2}^{m}$ by 
\begin{align*}
\begin{split}
     I_{3,2}^{m} &\leq  u^{*}v\left( \left\{ x \in \mathbb{R}^n : \left| \frac{ T \left(\sum_{j} (b-b_{Q_j})^{m}h_{j}v\right)(x)}{v(x)} \right| > \frac{\lambda}{4}   \right\} \right) \\
     & \leq \frac{C}{\lambda} \int_{\mathbb{R}^n} \left| \sum_{j} (b(x)-b_{Q_j})^{m} h_{j}(x) \right| M_{\Phi_{\varepsilon}, v^{1-q}}u^{*}(x) v (x) \, dx\\
& \leq  \frac{C}{\lambda} \sum_{j} \left(\inf_{Q_j}M_{\Phi_{\varepsilon}, v^{1-q}}u^{*} \right)\, \int_{Q_j} |b(x)-b_{Q_j}|^{m} f(x)  v (x) \, dx\\
& \hspace{5mm}+  \frac{C}{\lambda} \sum_{j} \left(\inf_{Q_j}M_{\Phi_{\varepsilon}, v^{1-q}}u^{*}\right) \, f_{Q}^{v} \,\int_{Q_j} |b(x)-b_{Q_j}|^{m} v (x) \, dx.
\end{split}
\end{align*}
Since $v \in RH_{\infty}$, by Lemma \ref{obs norma bmo y bmo p} and $\| \cdot\|_{\text{BMO},m}$ we get that
\begin{align*}
    \begin{split}
        f_{Q_{j}}^{v} \int_{Q_j} |b(x) - b_{Q_j}|^{m} v(x) \,dx &\leq C \frac{v(Q_j)}{|Q_j|} \frac{1}{v(Q_j)} \int_{Q_j} f(x)v(x)\,dx 
 \, \int_{Q_j} |b(x)-b_{Q_j}|^{m} \,dx \\
        & \leq C \int_{Q_j}f(x)v(x) \,dx .
    \end{split}
\end{align*}
For the other summand, by the generalized Hölder inequality and Proposition \ref{Young: igualdad normas a la r} and \ref{BMO comparacion norma phi y bmo}, it follows that 
\begin{align*}
    \begin{split}
        \int_{Q_j} |b(x) - b_{Q_j}|^{m} f(x) v(x) \, dx & \leq v(Q_j)  \|(b-b_{Q_j})^{m} \|_{\tilde{\Phi}_{m}, Q_{j},v} \|f \|_{\Phi_{m}, Q_j,v} \\
        & \leq C v(Q_j) \|f \|_{\Phi_{m}, Q_j,v} \\ & \leq  v(Q_j) \left( \lambda +\frac{\lambda}{v(Q_j)} \int_{Q_j} \Phi_{m} \left(\frac{|f(x)|}{\lambda} \right) v(x) \,dx    \right)\\
        & \leq C\lambda v(Q_j)+ \lambda \int_{Q_j} \Phi_{m} \left( \frac{|f(x)|}{\lambda} \right) v(x) \,dx \\
        & \leq C \lambda \int_{Q_j} \frac{f(x)}{\lambda}v(x)\,dx + \lambda \int_{Q_j} \Phi_{m} \left(\frac{|f(x)|}{\lambda} \right) v(x)\,dx \\
        &\leq C \lambda \int_{Q_j} \Phi_{m} \left(\frac{|f(x)|}{\lambda} \right) v(x)\,dx.
    \end{split}
\end{align*}
Therefore, 
\begin{equation*} 
    \begin{split}
        I_{3,2}^{m} & \leq \frac{C}{\lambda}  \sum_{j}\left( \inf_{Q_j}M_{\Phi_{\varepsilon}, v^{1-q}}u^{*}\right) \int_{Q_j} |b(x)-b_{Q_j}|^{m}  v (x) \, dx \\
        &\leq C  \sum_{j}\left(\inf_{Q_j}M_{\Phi_{\varepsilon}, v^{1-q}}u^{*} \right)  \int_{Q_j} \Phi_{m} \left(\frac{|f(x)|}{\lambda} \right) v (x) \,dx     \\ 
        & \leq C  \sum_{j} \int_{Q_{j} }\Phi_{m} \left(\frac{|f(x)|}{\lambda} \right)   M_{\Phi_{\varepsilon}, v^{1-q}}u^{*}(x) v(x) \,dx \\
        & \leq C \int_{\mathbb{R}^n} \Phi_{m} \left(\frac{|f(x)|}{\lambda} \right)   w(x) v(x) \,dx.
    \end{split}
\end{equation*}

At last, to estimate $I_{3,3}^{m}$ we apply the inductive hypothesis and arrive to 
\begin{align*}
    \begin{split}
        I_{3,3}^{m} & \leq \sum_{i=1}^{m-1}  u^{*} v \left( \left\{ \frac{|T^{i}_{b}(\sum_j (b-b_{Q_j})^{m-i} h_{j} v )(x)|}{v(x)} > \frac{\lambda}{6C_0}   \right\}  \right) \\
        & \leq \sum_{i=1}^{m-1} \int_{\mathbb{R}^n} \Phi_{i} \left(\frac{\sum_j |b(x)-b_{Q_j}|^{m-i} |h_{j}(x)|}{\lambda}\right) M_{\Phi_{i+\varepsilon}, v^{1-q'}}u^{*}(x) v(x) \,dx
    \end{split}
\end{align*}

\begin{align*}
    \begin{split}
     \textcolor{white}{ I_{3,3}^{m} }&   \leq \sum_{i=1}^{m-1} \sum_{j} \int_{Q_j} \Phi_{i} \left(\frac{ |b(x)-b_{Q_j}|^{m-i} |h_{j}(x)|}{\lambda}\right) M_{\Phi_{i+\varepsilon}, v^{1-q}}u_{j}^{*}(x) v(x) \,dx  \\
        &\leq \sum_{i=1}^{m-1} \sum_{j}  \left( \inf_{Q_j} M_{\Phi_{i+\varepsilon}, v^{1-q}} u^{*}_{j} \right) \int_{Q_j} \Phi_{i} \left(\frac{ |b(x)-b_{Q_j}|^{m-i} |h_{j}(x)|}{\lambda} \right)v(x) \,dx.
    \end{split}
\end{align*}
Let $\Psi_{i} (t) \approx (e^{ t^{1/i} }-e)\chi_{(1,\infty)}(t) $, we get that  $\Phi_{m}$, $\Psi_{m-i}$ and $\Phi_{i}$ verify condition \eqref{condicion triplete holder},  
\begin{align*}
     \Phi^{-1}_{m}(t) \Psi^{-1}_{m-i}(t) \leq C \Phi_{i}^{-1}(t).
\end{align*} Therefore, if $C_1$ is the constant from Proposition \ref{BMO comparacion norma phi y bmo}, by \eqref{des Young generalizada} we get that
\begin{align*}\begin{split}
    \int_{Q_j}  \Phi_{i} \left(\frac{  |b(x)-b_{Q_j}|^{m-i} |h_{j}(x)|}{ \lambda} \right)v(x) \,dx & \leq \int_{Q_j} \Phi_{m}\left( \frac{C_1 |h_j (x)|}{ \lambda} \right) v(x)\,dx \\
    & \hspace{7mm} +
    \int_{Q_j} \Psi_{m-i} \left( \frac{|b(x) - b_{Q_j}|^{m-i}}{C_1}\right) v(x) \,dx.
    \end{split}
\end{align*}
To estimate the first integral, since $\Phi_{m}$  is convex and submultiplicative and $v \in RH_{\infty}$, we obtain that 
\begin{align*}\begin{split}
    \int_{Q_j} \Phi_{m} \left( \frac{C_1 |h_j (x)|}{\lambda} \right) v(x)\,dx & \leq C \left(\int_{Q_j} \Phi_{m}\left(\frac{|f(x)|}{\lambda} \right) v(x) \,dx + \int_{Q_j} \Phi_{m} \left( \frac{f_{Q_j}^{v}}{\lambda} \right)v(x) \,dx\right) \\
    & \leq C \left(\int_{Q_j} \Phi_{m}\left(\frac{|f(x)|}{\lambda} \right) v(x) \,dx  + \Phi_{m} \left( \frac{f_{Q_j}^{v}}{\lambda} \right)  v(Q_j) \right)\\ & 
    \leq C \int_{Q_j} \Phi_{m}\left(\frac{|f(x)|}{\lambda} \right) v(x) \,dx.
    \end{split}
\end{align*}
For the second integral, Proposition \ref{BMO comparacion norma phi y bmo} allows us to obtain
\begin{align*}
    \begin{split}
        \int_{Q_j} \Psi_{m-i} \left( \frac{|b(x) - b_{Q_j}|^{m-i}}{C_1}\right) v(x) \,dx & = \int_{Q_j} \Psi \left(\frac{|b(x) - b_{Q_j}|^{m-i}}{C_1} \right) v(x) \,dx \\
        & \leq  \int_{Q_j} \Psi \left(\frac{|b(x)-b_{Q_j}|}{\| b - b_{Q_j} \|_{\Psi, Q_{j},v}}\right) v(x) \,dx \\
        & \leq v(Q_j) \leq C\int_{Q_j} \frac{f(x)}{\lambda}  v(x) \,dx .
    \end{split}
\end{align*}
Bringing together these two estimations, we get that
\begin{align*}
    \begin{split}
        I_{3,3}^{m} & \leq C\sum_{i=1}^{m-1} \sum_{j} \left( \inf_{Q_j} M_{\Phi_{i+\varepsilon}, v^{1-q}}u^{*}_{j} \right) \int_{Q_{j}}\Phi_{m}\left( \frac{|f(x)|}{\lambda}\right) v(x) \,dx \\ & \hspace{7mm} +C \sum_{i=1}^{m-1} \sum_{j}\left( \inf_{Q_j} M_{\Phi_{i+\varepsilon}, v^{1-q}}u^{*}_{j} \right)   \int_{Q_j} \frac{f(x)}{\lambda}v(x) \,dx 
    \end{split}
\end{align*} 
\begin{align*}
    \begin{split}
        \textcolor{white}{ I_{3,3}^{m}} & \leq C\sum_{i=1}^{m-1} \sum_{j}\int_{Q_{j}}\Phi_{m}\left( \frac{|f(x)|}{\lambda}\right) M_{\Phi_{i+\varepsilon}, v^{1-q}}u^{*}_{j}(x) v(x) \,dx\\& \hspace{7mm} + C \sum_{i=1}^{m-1} \sum_{j}\int_{Q_j}\Phi_{m}\left( \frac{|f(x)|}{\lambda}\right) M_{\Phi_{i+\varepsilon}, v^{1-q}}u^{*}_{j}(x)v(x) \,dx \\
        & \leq C \sum_{i=1}^{m-1} \int_{\mathbb{R}^n}\Phi_{m}\left( \frac{|f(x)|}{\lambda}\right) M_{\Phi_{i+\varepsilon}, v^{1-q}}u(x) v(x) \,dx.
    \end{split}
\end{align*}
Since $\Phi_{i+\varepsilon} \prec_{\infty} \Phi_{m+\varepsilon}$ for $1 \leq i \leq m-1$, by the hypotheses on the pair of weights it follows that  $M_{\Phi_{i+\varepsilon}, v^{1-q}}u(x) \leq C M_{\Phi_{m+\varepsilon}, v^{1-q}}u(x) \leq Cw(x)$, and
\begin{equation*}
    I_{3,3}^{m} \leq \int_{\mathbb{R}^n}\Phi_{m}\left( \frac{|f(x)|}{\lambda}\right) w(x) v(x) \,dx,
\end{equation*} which finalizes the proof.
\end{proof}
\section*{Declarations}

\subsection*{Ethical approval}
Not applicable.
\subsection*{Competing interests}
No potential conflict of interest was reported by the authors.
\subsection*{Author's contributions}
All authors whose names appear on the submission made substantial contributions to the conception and design of the work, drafted the work and revised it critically, approved this version to be published, and agree to be accountable for all aspects of the work in ensuring that questions related to the accuracy or integrity of any part of the work are appropriately investigated and resolved.
\subsection*{Funding}
The authors were supported by CAI+D 85320240100008LI (UNL) and PEICID 2023 Nº 181 (Gobierno de la Provincia de Santa Fe).
\subsection*{Availability of data and materials}
Not applicable.

\providecommand{\bysame}{\leavevmode\hbox to3em{\hrulefill}\thinspace}
\providecommand{\MR}{\relax\ifhmode\unskip\space\fi MR }
\providecommand{\MRhref}[2]{%
  \href{http://www.ams.org/mathscinet-getitem?mr=#1}{#2}
}

\end{document}